\theoremstyle{definition}
\newtheorem{definition}{Definition}[section]
\newtheorem{theorem}[definition]{Theorem}
\newtheorem{proposition}[definition]{Proposition}
\newtheorem{observation}[definition]{Observation}
\newtheorem{lemma}[definition]{Lemma}
\newtheorem{corollary}[definition]{Corollary}
\newtheorem{claim}[definition]{Claim}
\date{}
\newcommand{\phie}{\phi^{\sss \exists}}
\newcommand{\phiu}{\phi^{\sss \forall}}
\newcommand{\mse}{m_{\sss \exists}}
\newcommand{\msu}{m_{\sss \forall}}
\newcommand{\tp}{{\rm tp}}
\newcommand{\UF}{\mbox{$\mbox{\rm UF}_1^{\sss =}$}}
\newcommand{\UFO}[3]{
\ifthenelse{\equal{#3}{111}}
{$\forall^#1_{TC}[#2]$}{$\forall^#1_{TC}[#2,#3]$}}
\newcommand{\mA}{\mathfrak{A}}
\renewcommand{\phi}{\varphi} 
\newcommand{\AAA}{\mbox{\large \boldmath $\alpha$}}
\newcommand{\FOt}{\mbox{$\mbox{\rm FO}^2$}}
\newcommand{\set}[1]{\{#1\}}
\newcommand{\str}[1]{{\mathfrak{#1}}}
\newcommand{\restr}{\!\!\restriction\!\!}
\newcommand{\N}{{\mathbb N}}
\newcommand{\sss}{\scriptscriptstyle}
\begin{document}



\title{Complexity and Expressivity of  Uniform One-Dimensional Fragment with Equality}
%
%
\author{Emanuel Kiero\'nski\footnote{Institute of Computer Science, University of Wroc\l aw, Poland}\, \, and Antti Kuusisto\footnote{University of Tampere Logic Group and Institute of Computer Science,
University of Wroc\l aw, Poland}}
\maketitle
%
%
%


\begin{abstract}
\noindent
Uniform one-dimensional fragment \UF{} is a formalism 
%
obtained from first-order logic by limiting 
quantification to applications of blocks of existential (universal)
quantifiers such that at most
one variable remains free in the quantified formula.
The fragment is closed under Boolean operations,
but additional restrictions (called uniformity conditions)
apply to combinations of atomic formulas with two or more variables.
The fragment can be seen as a canonical generalization of two-variable logic,
defined in order to be able to deal with relations of arbitrary arities.
The fragment was introduced recently, and it was shown that the satisfiability problem
of the equality-free fragment $\mathrm{UF}_1$ of $\UF{}$ is decidable.
In this article we establish that the satisfiability and finite satisfiability problems of 
$\UF{}$ are $\mathrm{NEXPTIME}$-complete. We also show that the corresponding
problems for the extension of $\UF{}$  with counting quantifiers
are undecidable.
In addition to decidability questions, we compare the expressivities of $\UF{}$ and two-variable logic
with counting quantifiers $\mathrm{FOC}^2$. 
We show that while the logics
are incomparable in general, $\UF{}$ is strictly contained in $\mathrm{FOC}^2$
when attention is restricted to vocabularies with the arity bound two.

\end{abstract}

\section{Introduction}

\newcommand{\UFC}{\mbox{$\mbox{\rm UFC}_1^=$}}

%
%

Two-variable logic $\mathrm{FO}^2$ was introduced by Henkin
in \cite{IEEEonedimensional:henkin}
and proved decidable in \cite{IEEEonedimensional:mortimer} by Mortimer.
%
%
%
The satisfiability and finite satisfiability problems of 
$\mathrm{FO}^2$ were shown to
be $\mathrm{NEXPTIME}$-complete in \cite{GKV97}.
The extension of two-variable logic with counting quantifiers, $\mathrm{FOC}^2$,
was proved decidable in \cite{IEEEonedimensional:gradel}, \cite{PST97}.
It was subsequently shown to be
$\mathrm{NEXPTIME}$-complete in \cite{IEEEonedimensional:pratth}.
Research on extensions and variants of
two-variable logic is \emph{currently very active}.
Recent research efforts have mainly concerned decidability
and complexity issues
in restriction to particular classes of 
structures, and also questions
related to different built-in features and operators that 
increase the expressivity of the base language.
Recent articles in the field include for example
\cite{IEEEonedimensional:kieronski},
\cite{IEEEonedimensional:charatonic},
%
%
\cite{IEEEonedimensional:kieronskitendera},
%
%
\cite{IEEEonedimensional:tendera},
and several others.
%

%
%

%
%

%
%
%
Typical systems of modal logic are contained  in two-variable logic, or some variant of it,
and hence investigations on two-variable logics have
direct implications on various fields of computer science, including
verification of software and hardware, distributed systems,
knowledge representation and artificial intelligence.
However, two-variable logics do not cope well
with relations of arities greater than two, and therefore
the \emph{scope of related research is significantly restricted}.
In database theory
contexts, for example, two-variable logics as such are usually not
directly applicable due to the \emph{severe arity-related limitations}.
The recent article \cite{HK14} introduces the \emph{uniform one-dimensional fragment}, 
$\UF{}$, which is a natural generalization of $\mathrm{FO}^2$ to 
contexts with relations of arbitrary arities.
The logic $\UF{}$ is a fragment of first-order logic obtained by restricting
quantification to blocks of existential (universal) quantifiers
that \emph{leave
at most one free variable} in the resulting formula.
Additionally, a \emph{uniformity condition} applies to the use of
atomic formulas: if $n,k\geq 2$, then a Boolean combination of
atoms $R(x_1,...,x_k)$ and $S(y_1,...,y_n)$
is allowed only if $\{x_1,...,x_k\} = \{y_1,...,y_n\}$.
Boolean combinations of formulas with at most one
free variable can be formed freely,
and the use of equality is unrestricted.
%

%
%
%
%

%
It is established in \cite{HK14} that already the equality-free fragment
$\mathrm{UF}_1$ of $\UF{}$ can
define properties not expressible in $\mathrm{FOC}^2$ and also properties not expressible in
the recently introduced \emph{guarded negation fragment}  \cite{IEEEonedimensional:barany},
which significantly generalizes the \emph{guarded fragment} \cite{IEEEonedimensional:andreka}.
%
%
The article \cite{HK14} also shows, inter alia,
that the equality-free logic $\mathrm{UF}_1$ is decidable, and furthermore, that
minor modifications to the syntax of
$\mathrm{UF}_1$ lead to undecidable formalisms. Namely, the
\emph{uniform two-dimensional} and \emph{non-uniform one-dimensional}
fragments are shown undecidable.
In this article we establish that the satisfiability and finite satisfiability
problems of the uniform one-dimensional fragment
with equality ($\UF{}$) are $\mathrm{NEXPTIME}$-complete.
These results are obtained by appropriately generalizing and modifying
the construction in \cite{GKV97} that provides small models
for satisfiable \FOt{}-formulas in \emph{Scott normal form}.
The $\mathrm{NEXPTIME}$-completeness of $\mathrm{FOC}^2$ raises the natural question whether
the extension $\UFC$ of $\UF{}$ with \emph{counting quantifiers}
remains decidable. We answer this question in the 
negative by showing that the satisfiability and
finite satisifiability problems of $\UFC$ are
complete for $\Pi_1^0$ and $\Sigma_1^0$,
respectively. These results are established by tiling
arguments that make an appropriate use of a \emph{ternary} relation,
together with the usual unary relations commonly employed in
similar undecidability proofs.
We also study the expressivity of $\UF{}$.
We establish that while $\UF{}$ and $\mathrm{FOC}^2$ 
are incomparable in expressivity in general,
in restriction to vocabularies with the
arity bound two, we have $\UF{} < \mathrm{FOC}^2$.
The uniform one-dimensional fragment $\UF{}$ canonically extends $\mathrm{FO}^2$,
and in fact the equality-free fragments of $\UF{}$ and $\mathrm{FO}^2$
coincide when attention is limited to binary vocabularies.
We believe that $\UF{}$ is an interesting fragment
that can be used in order to 
\emph{extend the scope of research on
two-variable logics to the realm involving relations of arbitrary arities.}
\section{Preliminaries}

Let $m$ and $n\geq m$ be integers. We let $[m,n]$ denote
the set of integers $i$ such that $m\leq i \leq n$.
If $\varphi$ and $\psi$ are first-order formulas,
then $\varphi \equiv \psi$ indicates that the formulas are equivalent.
If $\mathcal{L}$ and $\mathcal{L}'$ are fragments of first-order logic,
we write $\mathcal{L}\leq \mathcal{L}'$ to indicate that for every sentence
of $\mathcal{L}$, there exists an equivalent sentence of $\mathcal{L}'$.
We let $\mathrm{VAR} := \{\, v_i\ |\ i\in\mathbb{N}\ \}$
denote the set of first-order variable symbols.
We mostly use \emph{metavariables} $x,y,z, x_i, y_i, z_i$, etc.,
in order to refer to symbols in $\mathrm{VAR}$.
Notice that for example $x_1$ and $x_2$ may denote
the same variable in $\mathrm{VAR}$, while
$v_1$ and $v_2$ are necessarily
different variables.
The set of free variables of a formula $\psi$ is denoted by $\mathit{free}(\psi)$.
%


%
%
%

%
Let $X = \{x_1,...,x_n\}$ be a finite set of
variable symbols.
%
%
Let $R$ be a
$k$-ary
relation symbol.
%
%
An atomic formula $R(x_{i_1},...,x_{i_k})$ is called an \emph{$X$-atom}
if $\{x_{i_1},...,x_{i_k}\} = X$.
A finite set of $X$-atoms is an \emph{$X$-uniform set}.
When $X$ is irrelevant or known from the context, we may
simply talk about a \emph{uniform set}.
%
%
%
For example, if $x,y,z$ are distinct variables,
then $\{T(x,y), S(y,x)\}$ and $\{R(x,x,y),R(y,y,x), S(y,x)\}$ are uniform sets,
while $\{R(x,y,z),R(x,y,y)\}$ and $\{S(x,y), x=y\}$ are not (uniform sets
are not allowed to contain equality atoms).
The empty set is an $X$-uniform set for every finite subset of $\mathrm{VAR}$,
including $\emptyset$.
Let $\mathbb{Z}_+$ denote the set of positive integers.
Let $\mathcal{V}$ denote a \emph{complete
relational vocabulary}, i.e., $\mathcal{V} := \bigcup_{k\, \in\, \mathbb{Z}_+} \tau_k$,
where $\tau_k$ denotes a countably infinite
set of $k$-ary relation symbols.
Every vocabulary $\tau$ we consider below
is assumed to be a subset of $\mathcal{V}$.
%
%
%
%
%
A \emph{$k$-ary $\tau$-atom} is an atomic $\tau$-formula $\psi$
such that $|\mathit{free}(\psi)| = k$. For example, if $P\in\tau$
is a unary and $R\in\tau$ a binary symbol, then 
$P(x)$, $x=x$, $R(x,x)$ are unary $\tau$-atoms, and
$R(v_1,v_2)$, $v_1=v_2$ are binary $\tau$-atoms.
If $\tau$ is known 
form the context or irrelevant, we may simply
talk about $k$-ary atoms.
Let $\tau\subseteq \mathcal{V}$.
The set $\UF{}(\tau)$, or the set of $\tau$-formulas of
the uniform one-dimensional fragment, is
the smallest set $\mathcal{F}$ satisfying the following conditions.
\begin{enumerate}
\item
Every unary $\tau$-atom is in $\mathcal{F}$.
Also $\bot,\top\in \mathcal{F}$.
\item
Every identity atom $x=y$ is in $\mathcal{F}$.
\item
If $\varphi\in \mathcal{F}$, then $\neg\varphi\in\mathcal{F}$.
If $\varphi_1,\varphi_2\in \mathcal{F}$,
then $(\varphi_1\wedge\varphi_2)\in \mathcal{F}$.
\item
Let $X = \{x_0,...,x_k\}\subseteq\mathrm{VAR}$.
Let $U$ be a finite set of
formulas $\psi\in\mathcal{F}$ whose
free variables are in $X$.
Let $V\subseteq X$.
Let $F$ be a $V$-uniform set of $\tau$-atoms.
Let $\varphi$ be any Boolean combination of formulas in $U\cup F$.
Then $\exists x_1...\exists x_k\, \varphi\, \in \mathcal{F}$ and
$\exists x_0...\exists x_k\, \varphi\, \in \mathcal{F}$.
%
%
%
\end{enumerate}
%
%
%
%
%
%
Let $\UF{}$ denote the set $\UF{}(\mathcal{V})$.
%
%
%

%
Let $\overline{x}$ denote a tuple of variables, and
let $\chi := \exists \overline{x}\, \varphi$ be a $\UF{}$-formula
formed by using the rule $4$ above. Assume that $\varphi$ is quantifier-free.
Then we call $\varphi$ a \emph{$\UF{}$-matrix}.
If $\varphi$ does not contain $k$-ary atoms for any $k\geq 2$,
with the possible exception of equality atoms $x= y$, 
then we define $S_{\varphi} :=\emptyset$.
Otherwise we define $S_{\varphi}$ to be 
the set $V$ used in the construction of $\chi$ (see rule 4).
The set $S_{\varphi}$ is the set of \emph{live variables} of $\varphi$.
Let $\psi(x_0, \ldots, x_k)$ be a $\UF{}$-matrix, where
$(x_0,...,x_k)$ enumerates the variables of $\psi$.
%
%
Let $\str{A}$ be a structure. Let $a_0, \ldots, a_k \in A$,
where $A$ is the domain of $\str{A}$.
We let $\mbox{live}\bigl(\psi(x_0, \ldots, x_k)[a_0, \ldots, a_k]\bigr)$
denote the set $T\subseteq \set{a_0, \ldots, a_k}$
such that $a_i\in T$ iff $x_i$ is a live variable of  $\psi(x_0, \ldots, x_k)$.
We may write $\mbox{live}\bigl(\psi[a_0, \ldots, a_k]\bigr)$
instead of $\mbox{live}\bigl(\psi(x_0, \ldots, x_k)[a_0, \ldots, a_k]\bigr)$ when no 
confusion can arise. Notice that since the elements $a_i$ are not 
required to be distinct, it is possible that
$|\mbox{live}(\psi[a_0, \ldots, a_k])|$ is smaller than the number of live variables in $\psi$.
%


%
%
%

%
Let $\tau\subseteq\mathcal{V}$ be a finite vocabulary.
A \emph{$1$-type} over the vocabulary $\tau$ is a maximal
satisfiable set of literals (atoms and negated atoms) over $\tau$ with the variable $v_1$.
The set of all $1$-types over $\tau$ is denoted by $\AAA[\tau]$,
or just by $\AAA$ when $\tau$ is clear from the context. 
%
We identify $1$-types $\alpha$ and conjunctions
$\bigwedge\alpha$.
%
%
%
A {\em $k$-table} over $\tau$ is a
maximal satisfiable set of $\{v_1,...,v_k\}$-atoms and 
negated $\{v_1,...,v_k\}$-atoms over $\tau$.
Recall that a $\{v_1,...,v_k\}$-atom must contain exactly
all the variables in $\{v_1,...,v_k\}$, and note that a $2$-table
does not contain equality formulas or negated equality formulas.
We identify $k$-tables $\beta$ and
conjunctions $\bigwedge\beta$.
Let $\str{A}$ be a $\tau$-structure, and let $a \in A$.
Let $\alpha$ be a $1$-type over $\tau$.
We say that $a$ \emph{realizes} $\alpha$ if $\alpha$ is the unique $1$-type
such that $\str{A} \models \alpha[a]$.  We let $\tp_\str{A}(a)$ denote the $1$-type realized by $a$. Similarly, for
\emph{distinct} elements $a_1, \ldots, a_k \in A$, we let $\mathrm{tb}_\str{A}(a_1, \ldots, a_k)$ denote the unique
$k$-table \emph{realized} by the tuple $(a_1,\ldots, a_k)$, i.e.,~the $k$-table $\beta(v_1,...,v_k)$
such that $\str{A} \models \beta[a_1,\ldots, a_k]$. 
Note that we have $\tp_\str{A}(a) \equiv \mathrm{tb}_\str{A}(a)$ for every $a \in A$.
Let $m$ be the maximum arity of symbols in $\tau$.
We observe that to fully define
a $\tau$-structure $\str{A}$ over a known domain $A$,
it is sufficient to consider each set $B \subseteq A$, $|B| \leq m$,
and first 
choose an enumeration $(b_1, \ldots, b_{|B|})$ of the elements of $B$,
and then
specify $\mathrm{tb}_\str{A}(b_1, \ldots, b_{|B|})$.
%
%
%
%
%
\begin{observation}\label{o:satisfaction}
Let $\psi(x_1, ..., x_k)$ be a $\UF$-matrix,
where $(x_1,...,x_k)$ enumerates the variables in $\psi$.
Let $\str{A}$ be a $\tau$-structure, where $\tau$
is the set of relation symbols in $\psi$.
Let $a_1, ..., a_k\in A$ be a sequence of (not necessarily distinct) elements.
Whether or not $\str{A} \models \psi[a_1, ..., a_k]$ holds,
depends only on
(i) the $1$-types of the elements $a_i$,
(ii) the list of pairs $(a_i,a_j)$ such that $a_i = a_j$,
and (iii) the table $\mathrm{tb}_\str{A}(b_1, ..., b_l)$, where $(b_1,..., b_l)$ is an
arbitrary enumeration of $\mbox{live}(\psi[a_1, ..., a_k])$.
\end{observation}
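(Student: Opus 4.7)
The plan is a straightforward structural induction on $\psi$, viewed as a quantifier-free first-order formula. Boolean connectives trivially preserve the desired property, so the entire content of the argument sits in the atomic case: it suffices to show that for every atomic subformula $\alpha$ of $\psi$, the truth value of $\alpha[a_1,\ldots,a_k]$ is determined by (i)--(iii).

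First I would classify the atomic subformulas of a $\UF$-matrix. Unwinding the definition and using quantifier-freeness, every such atom falls into exactly one of three classes: a unary $\tau$-atom (for example $P(x_j)$ or $R(x_j,\ldots,x_j)$), an equality atom $x_i = x_j$, or an atom drawn from the single $V$-uniform set $F$ used in the outermost application of rule~4, where $V = S_\psi$. The crucial point here is that any $\UF$-formula appearing as a quantifier-free member of the set $U$ from rule~4 must itself have been built via rules 1--3 only, since rule~4 always introduces an existential block; by a trivial induction such a formula contributes only unary and equality atoms, so no additional multi-variable atoms can enter $\psi$ through the $U$-component.

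With the classification in hand, I would check the three atomic cases. A unary atom $P(x_j)$ evaluates to $P(a_j)$, whose truth value is encoded in $\tp_\str{A}(a_j)$, giving (i). An equality $x_i = x_j$ is decided purely by whether $a_i = a_j$, giving (ii). For an atom $R(x_{i_1},\ldots,x_{i_m}) \in F$, $V$-uniformity gives $\{x_{i_1},\ldots,x_{i_m}\} = V$, so the set of arguments of the instantiated atom $R(a_{i_1},\ldots,a_{i_m})$ equals $\{a_j : x_j \in V\} = \mbox{live}(\psi[a_1,\ldots,a_k])$. Fixing any enumeration $(b_1,\ldots,b_l)$ of the live set, the equality pattern (ii) pins down, for each argument position $t$, the unique index $s(t)$ with $a_{i_t} = b_{s(t)}$; surjectivity of $s$ is immediate from the previous sentence, so after the renaming $b_s \mapsto v_s$ the instantiated atom becomes a $\{v_1,\ldots,v_l\}$-atom whose truth value is explicitly recorded in $\mathrm{tb}_\str{A}(b_1,\ldots,b_l)$, giving (iii).

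The only step requiring any care is the syntactic classification of atomic subformulas, since rule~4 in principle lets $U$ contain arbitrary $\UF$-formulas; one simply has to observe that quantifier-freeness collapses the allowed members of $U$ back to Boolean combinations of unary and equality atoms. Beyond that the proof is routine bookkeeping and no genuine obstacle arises.
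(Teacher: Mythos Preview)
Your argument is correct. The paper states this result as an observation without proof, so there is no paper proof to compare against; your structural induction with the three-way classification of atomic subformulas is exactly the intended verification.

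One small remark on your justification that quantifier-free members of $U$ contribute only unary and equality atoms: rule~4 does not literally ``always introduce an existential block,'' since the variant $\exists x_1\ldots\exists x_k\,\varphi$ with $k=0$ yields an empty prefix. However, in that degenerate case $X=\{x_0\}$ forces $V\subseteq\{x_0\}$, so every atom in $F$ is unary anyway, and a short induction on the construction still gives that every quantifier-free $\UF$-formula is a Boolean combination of unary atoms, equalities, $\top$, and $\bot$. Your conclusion stands unchanged.
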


\section{Complexity of $\UF{}$}\label{decidabilitysection}


%
We now introduce a normal form for \UF{} inspired by the \emph{Scott normal form}  for \FOt{} \cite{Sco62}.
We say that a \UF{}-formula $\phi$ is in \emph{generalized Scott normal form} if 
\begin{equation}\label{normalform}
\phi=\bigwedge_{1\, \le\, i\, \le\, \mse}\hspace{-10pt} \forall x \exists y_1 \ldots y_{k_i}
\phie_i(x, y_1, \ldots, y_{k_i}) \wedge \bigwedge_{1\, \le\,  i\,  \le\, \msu}\hspace{-10pt}
\forall x_1 \ldots x_{l_i} \phiu_i(x_1, \ldots, x_{l_i}),
\end{equation}
where formulas $\phie_i$ and $\phiu_i$ are quantifier-free \UF{}-matrices.


\begin{proposition}
Each \UF{}-formula $\phi$ translates in polynomial time to a
\UF{}-formula $\phi'$ in  generalized Scott normal form (over a signature extended by some
fresh unary symbols) such that $\phi$ and $\phi'$ are satisfiable over the same domains.
\end{proposition}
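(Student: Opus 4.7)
The plan is to mimic the classical Scott normalization used for \FOt{} \cite{GKV97}, exploiting the fact that the syntactic definition of \UF{} already forces every quantified subformula to have at most one free variable; this makes each such subformula namable by a single fresh unary atom.

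I would iterate the following renaming step. Pick a quantified subformula $\chi = Q\overline{y}\,\psi(z,\overline{y})$ of the current formula, with $Q\in\{\exists,\forall\}$ and $\psi$ quantifier-free; let $z$ be the (at most one) free variable of $\chi$, using a single fixed dummy variable $z_0$ if $\chi$ is closed. Introduce a fresh unary predicate $P_\chi$, replace every occurrence of $\chi$ by the unary atom $P_\chi(z)$, and conjoin the defining axiom $\forall z\,(P_\chi(z)\leftrightarrow\chi)$. The two implications of this axiom unfold into one $\forall z\,\exists\overline{y}$-conjunct and one $\forall z\overline{y}$-conjunct: for $Q=\exists$ they are
$\forall z\,\exists\overline{y}\,(\neg P_\chi(z)\vee\psi)$ and $\forall z\overline{y}\,(P_\chi(z)\vee\neg\psi)$,
and dually for $Q=\forall$ they are
$\forall z\overline{y}\,(\neg P_\chi(z)\vee\psi)$ and $\forall z\,\exists\overline{y}\,(P_\chi(z)\vee\neg\psi)$,
both already of one of the two shapes appearing in (\ref{normalform}). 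Repeated application eliminates every quantifier occurrence of the original $\phi$; the remaining top-level Boolean combination then only involves unary atoms in $z_0$ (together with $\top,\bot$), and can be wrapped by $\forall z_0$ to yield one final $\forall$-conjunct.

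The only delicate point to check is that each matrix produced is still a legitimate \UF{}-matrix, i.e., that the uniformity condition on atoms of arity $\ge 2$ is preserved. This is immediate, since each $P_\chi(z)$ is a \emph{unary} atom, and rule~4 of the \UF{} definition permits unary formulas to be combined freely with a $V$-uniform set of higher-arity atoms; thus the uniformity present in $\psi$ is inherited by both $\neg P_\chi(z)\vee\psi$ and $P_\chi(z)\vee\neg\psi$, and the replacement of $\chi$ inside a surrounding matrix preserves uniformity there as well (because $\chi$ had at most one free variable and is being substituted by another such formula). Equi-satisfiability over the same class of domains follows routinely: any $\tau$-model $\str{A}$ of $\phi$ expands uniquely to a model of $\phi'$ by setting $P_\chi^{\str{A}} := \{a\in A : \str{A}\models\chi[a]\}$ (equal to $A$ or $\emptyset$ in the closed case), and conversely the $\tau$-reduct of any model of $\phi'$ satisfies $\phi$. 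Polynomiality is clear since each subformula of $\phi$ is renamed exactly once, and the two conjuncts produced have size linear in the renamed subformula.
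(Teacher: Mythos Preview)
Your approach is precisely the standard Scott-style renaming that the paper invokes (the paper's own proof is just a one-line reference to the well-known translation), so the overall strategy matches the intended argument.

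There is, however, one technical slip. When a \emph{closed} quantified subformula $\chi$ occurs strictly inside a block that already carries a free variable $x$---for instance $\chi=\exists z\,S(z)$ inside $\exists y\,(R(x,y)\wedge\chi)$---replacing $\chi$ by $P_\chi(z_0)$ with a fresh dummy variable produces $\exists y\,(R(x,y)\wedge P_\chi(z_0))$, which now has \emph{two} free variables $x$ and $z_0$ and is therefore no longer a \UF{}-formula (one-dimensionality fails). The easy repair is to replace each occurrence of a closed $\chi$ by $P_\chi(w)$ where $w$ is any variable already present at that occurrence (e.g.\ the free variable of the enclosing block); since the defining axiom $\forall w\,(P_\chi(w)\leftrightarrow\chi)$ forces $P_\chi$ to denote either the full domain or $\emptyset$, the particular choice of $w$ is immaterial. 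With this adjustment the rest of your argument goes through unchanged.
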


\begin{proof}
A  simple adaptation of the well-known
translation given, e.g., in \cite{Ebbinghaus}.
\end{proof}

Let $\varphi$ be a  $\UF$-formula in generalized Scott-normal form given in Equation \ref{normalform}.
Assume $\str{A}\models\varphi$.
We will build a small $\tau$-model $\str{A'}$ of $\phi$,
where $\tau$ is the set of relation symbols in $\varphi$. Our construction
modifies and generalizes the construction of a small model
for a satisfiable \FOt{}-formula in Scott normal from \cite{GKV97}.
Let $a \in A$ and $b_1, \ldots, b_{k_i}\in A$ be elements such that $\str{A} \models \phie_i[a, b_1, \ldots, b_{k_i}]$.
We say that the structure $\str{B} := \str{A} \restr \set{a, b_1, \ldots, b_{k_i}}$
is a \emph{witness structure} for $a$ and $\phie_i$. The substructure of $\str{B}$ restricted to the elements in $\mbox{live}(\phie_i[a, b_1, \ldots, b_{k_i}])$ 
is called the \emph{live part} of $\str{B}$. If the live part of $\str{B}$ does not contain $a$, then the live part is called \emph{free}. Note that $|B|$ may be smaller
than $k_i+1$ 
(this may be even imposed by the use of equalities). Also, $a$ may be a member of the live part of  $\str{B}$ even if the variable
$x$ is not a live variable of $\phie_i$.

\medskip\noindent
{\em The court.}
Let $n$ be the width of $\phi$, i.e.,~$n=\max (\set{k_i+1}_{1 \le i \le \mse} \cup \set{l_i}_{1 \le i \le \msu})$. 
We assume, w.l.o.g., that $n\geq 2$.
A $1$-type $\alpha$ realized in $\str{A}$ is \emph{royal} if it is realized at most $n-1$ times in $\str{A}$. The points in $A$ that realize a
royal $1$-type are called \emph{kings}. Let $K$ be the set of all kings in $\str{A}$. Clearly $|K| \le (n-1) |\AAA|$
%
%

We then define a set $D\subseteq A$.
For each pair $(\alpha$, $\phie_i)$, where $\alpha$ is a $1$-type realized in $\str{A}$, if it is possible,
select an element $a \in A$ that realizes the $1$-type $\alpha$ such that 
there exists a witness structure $\str{B}_{\alpha,i}$ for $a$ and $\phie_i$ whose live part $\bar{\str{B}}_{\alpha, i}$ is free.
Add the elements of $\bar{B}_{\alpha,i}$ to $D$. 
Since we add at most $n-1$ elements for each pair $(\alpha$, $\phie_i)$, the total size of $D$ is
bounded by $(n-1)\mse |\AAA|$.

For each $a \in K \cup D$ and each $\phie_i$, select a witness structure in $\str{A}$ and
let $C_{a,i}$ denote its universe. 
Define $C := K \cup D \cup \bigcup_{a \in K \cup D, 1 \le i \le \mse} C_{a,i}$. We call
$\str{C} := \str{A}\upharpoonright C$ the \emph{court} of $\str{A}$. Note that
$|C| \le n|K \cup D| \le n((n-1)+(n-1)\mse)|\AAA|$. 
We have $|C| \le 2 |\phi|^3 2^{|\phi|}$. 

\medskip \noindent
{\em Universe.}
The court $\str{C}$ of $\str{A}$ will be a substructure of $\str{A}'$. 
The remaining part of the universe of $\str{A}'$
consists of three fresh disjoint sets $E, F, G$. 
Each of them contains $\mse + n$ elements of type $\alpha$ for each non-royal $\alpha$ 
realized in $\str{A}$.
The $i$-th element of type $\alpha$ ($1 \le i \le \mse + n$) in $E$ (resp.~$F$, $G$) is denoted
$e_{\alpha, i}$ (resp.~$f_{\alpha, i}$~, $g_{\alpha, i}$).
The size of each set $E$, $F$, $G$ is bounded by $(n+\mse)|\AAA| \le 2|\phi|2^{|\phi|}$. Thus
the total size of $|A'|$ is bounded by $8 |\phi|^3 2^{|\phi|}$, which is exponential in $|\phi|$.
\medskip \noindent
{\em Witnesses.} Our next aim is to provide witness structures for each element of $a \in A' \setminus (K\cup D)$
and each $\phie_j$.
We will choose elements in $A'$ which will form the universe (say, of size $s$) of the live part of a witness structure for $a$ and $\phie_j$
and define the $s$-table on these elements. The remaining elements of the witness structure
(elements not in the live part) will then be very easily found in $A'$.

Let $a' \in A' \setminus (K\cup D)$. We find a
\emph{pattern element} $a \in A$ of $a'$ as follows.
If $a' \in C$, then the pattern element is $a'$ itself.
If $a' \in E \cup F \cup G$, then we let an arbitrary $a \in A$ such
that $\tp_\str{A}(a)=\tp_\str{A'}(a')$ be the pattern element of $a'$.
For each $1 \le j \le \mse$, we find a witness structure $\str{B}_{a,j}$ for
$a$ and $\phie_j$, and let $\bar{\str{B}}_{a,j}$ be its live part. If this live part is free, then
there is nothing to do; an appropriate live part for the witness
structure of $a'$ and $\phie_j$ already exists in 
$\str{D} := \str{C}\upharpoonright D$. Otherwise, let $r_1, \ldots, r_k$ be the kings included in
$\bar{\str{B}}_{a,j}$ (possibly $k=0$), and let $a, b_1, \ldots, b_l$ be the non-royal
elements of $\bar{\str{B}}_{a,j}$ (possibly $l=0$). Let $\alpha_i$ be the $1$-type of $b_i$
($1 \le i \le l$). We consider the following cases.

\medskip\noindent
\emph{Case 1.} If $l=0$ and $a' \in C$, then there is nothing to do; $a'$ forms the live part of the desired witness
structure together with some elements in $K$.

\medskip\noindent
\emph{Case 2.}
If $l=0$ and $a' \not\in C$, then we 
set $\mathrm{tb}_{\str{A}'}(a',r_1, \ldots, r_k) := \mathrm{tb}_{\str{A}}(a,r_1, \ldots, r_k)$.

\medskip\noindent
\emph{Case 3.} If $l>0$ and $a' \in E$,
then we define $b_1' :=f_{\alpha_1, j}$ and choose $b_2', \ldots, b_l'$ to be
distinct elements of types $\alpha_2, \ldots, \alpha_l$ from 
$S := \{ f_{\alpha, s}: \mse +1 \le s \le \mse +n, \alpha \mbox{ non-royal} \}$. 
This is possible since $l < n$ and $S$ contains $n$ realizations of each non-royal $1$-type.
We set \vspace{-5.5pt}
\begin{center}
$\mathrm{tb}_{\str{A}'}(a',r_1, \ldots, r_k, b_1', \ldots, b_l') :=
\mathrm{tb}_{\str{A}}(a,r_1, \ldots, r_k, b_1, \ldots, b_l).$
\end{center}
\vspace{-8pt}
\medskip\noindent
\emph{Case 4.} 
If $l>0$ and $a' \in F$ (resp.~$a' \in G \cup (C \setminus (K\cup D))$), then we proceed as in the previous case, but we take
the elements $b_i'$ from $G$ (resp.~$E$).

\medskip

\begin{claim}
The described  procedure of providing live parts of witness structures can be executed without conflicts.
\end{claim}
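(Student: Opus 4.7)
The plan is to show that no tuple of elements in $A'$ receives two incompatible table assignments during the witness construction. New assignments occur only in Cases 2, 3, and 4 (Case 1 merely inherits from $\str{C}$), so I would split the check into two parts: a new assignment conflicting with $\str{C}$, and two new assignments from distinct pairs $(a', j)$ conflicting with each other.

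For the first part, I would observe that every tuple assigned a table in Cases 2, 3, 4 contains at least one element outside $C$. In Case 2, in Case 3, and in the first branch of Case 4, the element $a'$ itself lies in $E \cup F \cup G$; in the remaining branch of Case 4 (where $a' \in C \setminus (K\cup D)$ and $l>0$) the witnesses $b'_i$ are drawn from $E$. Hence no newly assigned tuple is entirely contained in $C$, so the tables pre-existing in $\str{C}$ are never overwritten.

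For the second part, I would use the cyclic pool rule $E \to F$, $F \to G$, $G \to E$, $(C \setminus (K \cup D)) \to E$ that governs where witnesses are drawn, together with the $j$-indexing of the anchor $b'_1$. If $a'_1 \neq a'_2$, the disjointness of $E, F, G$ and the cyclic rule prevent the two underlying element sets from coinciding: for instance, if $a'_1 \in E$ and $a'_2 \in F$, then for $a'_1$ to appear in the tuple of $a'_2$ it would have to be among the witnesses of $a'_2$, i.e.\ in $G$, contradicting $a'_1 \in E$; the remaining cross-pair configurations are analogous. If $a'_1 = a'_2$ with $l_1, l_2 > 0$ and $j_1 \neq j_2$, the anchors $f_{\alpha_1, j_1}$ and $f_{\alpha_1, j_2}$ (or their $G$- or $E$-analogues) differ by the $j$-index, so the tuples differ. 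If $a'_1 = a'_2$ and $l_1 = l_2 = 0$, both assignments copy $\mathrm{tb}_{\str{A}}(a, r_1, \ldots, r_k)$ from the common pattern $a$, so whenever the king-subsets happen to coincide the tables do too; the mixed case $l_1 = 0, l_2 > 0$ is ruled out because the two tuples then contain different non-king elements (one involves a $b'_i \in E \cup F \cup G$, the other does not).

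The main obstacle is to carry the cross-pair analysis through cleanly: one must keep track of the position of $a'$ (in $E, F, G$, or $C \setminus (K \cup D)$), of the cyclic destination of its witnesses, and of the $j$-index of the anchor, and combine these to rule out all configurations in which two different calls would overwrite one another. A final minor check is that the pool $S$ always supplies enough distinct witnesses of each required $1$-type; since $S$ contains $n$ realisations of each non-royal type while at most $l \leq n-1$ elements are needed per call, this is immediate.
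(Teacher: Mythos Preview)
Your approach is the same as the paper's, and your extra Part~1 (no conflict with tables already fixed in $\str{C}$) is a useful explicit check that the paper leaves implicit. There is, however, one small but genuine gap in your Part~2.

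In the case $a'_1 = a'_2$, $l_1,l_2>0$, $j_1\neq j_2$, you argue that the anchors $b'_1=f_{\alpha_1,j_1}$ and $b''_1=f_{\alpha_1',j_2}$ differ, and conclude ``so the tuples differ.'' But distinctness of the anchors does \emph{not} by itself imply that the underlying element sets differ: you must exclude the possibility that $b'_1$ reappears as one of the \emph{non-anchor} witnesses $b''_2,\ldots,b''_{l_2}$ of the other call. The paper closes this by observing that $b'_1$ has index $j_1\le m_\exists$, whereas every $b''_m$ with $m>1$ is drawn from the pool $S$ and therefore has index strictly greater than $m_\exists$; hence $b'_1$ is absent from the second tuple altogether. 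You clearly know the structure of $S$ (you mention it in your final ``minor check''), so this is easy to add, but as written the inference ``anchors differ $\Rightarrow$ sets differ'' is unjustified. A similar imprecision occurs in your mixed case $l_1=0$, $l_2>0$: both tuples contain the non-king element $a'_1=a'_2$, so saying ``one involves a $b'_i\in E\cup F\cup G$, the other does not'' is not quite the right formulation; the correct point is that the $b''_i$'s lie in the cyclic successor pool of $a'_1$ and hence are distinct from $a'_1$ and from all kings, so the second set strictly contains the first.
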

\begin{proof}
Consider two tuples of elements $t'=a', r'_1, \ldots, r'_{k'}, b'_1, \ldots, b'_{l'}$ 
and $t''=a'', r''_1, \ldots, r''_{k''}, b''_1, \ldots, b''_{l''}$ that form in the described process the live parts
of witness structures for, respectively, $a', \phie_i$ and $a'', \phie_j$.

If $a' \not= a''$, then the two tuples differ, since our strategy of using the sets $E$, $F$, $G$
ensures that $a'$ cannot be a member of $t''$ and $a''$ cannot be a member of $t'$.

If $a'=a''$, $i \not=j$ and $l' >0$, then the two tuples differ since $b'_1$ cannot be a member of
$t''$, due to the following reason. Since $b_1'$ is selected to be the $i$-th realization of some $\alpha$ from $E$, $F$, or $G$, it must differ from $b''_1$, which is the $j$-th 
realization of some $\alpha$. Furthermore, $b_i'$ must differ from each $b_m''$, $m>1$, since
each of them is the $s$-th realization of some $\alpha$ for some $s>\mse$,
while $i\leq \mse$. The case where
$a'=a''$, $i \not=j$ and $l'' >0$
is symmetric.

If $a'=a''$, $i \not=j$ and $l'=l''=0$, then it is possible that
the two tuples are identical, but in this case they contain only $a$ and some kings $r'_1, \ldots, r'_{k'}$,
and even though $\mathrm{tb}_{\str{A}'}(a', r'_1, \ldots, r'_{k'})$ is defined twice, it is
done precisely the same way both times
%
(setting $\mathrm{tb}_{\str{A}'}(a', r'_1, \ldots, r'_{k'}) := \mathrm{tb}_{\str{A}'}(c, r'_1, \ldots, r'_{k'})$ both times,
where $c$ is the pattern element of $a'$).
%
%
\end{proof}

It is probably worth commenting why we prepared free live parts of witness structures in $\str{D}$ instead of building them
using elements of $E \cup F \cup G$ in a ``regular'' way.
One of the problematic situations arises, e.g., when an element $a'$ from, say, $E$ builds the live part
of its witness structure for some $\phie_i$ using an element $b' \in F$ and some kings $r_1, \ldots, r_k$. In this case $\mathrm{tb}_{\str{A}'}(b', r_1, \ldots, r_k)$
is defined. However, it may happen that $b'$ needs to form the live part of its witness structure for some $\phie_j$ using precisely the elements $b', r_1, \ldots, r_k$, which
can lead to a conflict.
 
\medskip \noindent
{\em Completion.}
Let $a'_1, \ldots, a'_k$ ($a'_i \not=a'_j$ for $i \not=j$, $1 < k \le n$) be elements in $A'$ such that
the table $\mathrm{tb}_{\str{A}'}(a'_1, \ldots, a'_k)$
has not yet been defined. Select distinct elements $a_1, \ldots, a_k$ of $\str{A}$ such that
$\tp_\str{A}(a_i)=\tp_{\str{A}'}(a'_i)$ ($1 \le i \le k$). This is always possible due to our strategy of
not introducing extra kings. Set $\mathrm{tb}_{\str{A}'}(a'_1, \ldots, a'_k) := \mathrm{tb}_{\str{A}}(a_1, \ldots, a_k)$.

\medskip

\begin{claim}
$\str{A}' \models \phi$.
\end{claim}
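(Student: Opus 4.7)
The plan is to verify the two conjuncts of $\phi$ separately: the existential part $\bigwedge_i \forall x \exists \bar{y}\, \phie_i$ and the universal part $\bigwedge_i \forall \bar{x}\, \phiu_i$. The key technical tool throughout is Observation \ref{o:satisfaction}, which reduces whether a $\UF{}$-matrix holds on a tuple in a given structure to checking (i) the $1$-types of the elements, (ii) the equality pattern among them, and (iii) the table on their live subset. Thus it suffices to exhibit, for each candidate witness tuple and each universal instantiation in $\str{A}'$, a ``pattern tuple'' in $\str{A}$ that agrees with it on (i)--(iii) and on which the matrix is already known to hold.

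For the existential part, fix $a' \in A'$ and $1 \le j \le \mse$, and split on where $a'$ lives. If $a' \in K \cup D$, then the witness structure on $C_{a',j}$ selected during the court construction lies entirely inside the court $\str{C}$; since $\str{C}$ is preserved as a substructure of $\str{A}'$, the same elements continue to witness $\phie_j[a',\ldots]$ in $\str{A}'$. Otherwise $a'$ has a pattern $a \in A$, and Cases 1--4 (or, in the free case, the live part already stored in $\str{D}$) produce a list of kings $r_1,\ldots,r_k$ and non-royal elements $b_1',\ldots,b_l'$ whose $1$-types mirror those of the pattern live part in $\str{A}$ and on which the relevant table is set by copying directly from $\str{A}$. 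The remaining, non-live positions of the $k_j$-tuple can be filled by arbitrary elements of $A'$ with matching $1$-types, which exist because each non-royal $1$-type has $\mse+n$ realizations in each of $E,F,G$ and each royal $1$-type has its kings preserved in $\str{A}'$. By the preceding claim these definitions are mutually consistent, and by Observation \ref{o:satisfaction} the truth $\str{A}\models\phie_j$ at the pattern transfers to $\str{A}'\models\phie_j$ at the constructed tuple.

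For the universal part, fix $1 \le j \le \msu$ and an arbitrary tuple $(a_1',\ldots,a_{l_j}')$ in $A'$, and let $\{d_1,\ldots,d_m\}$ enumerate its distinct elements. The central claim is that $\mathrm{tb}_{\str{A}'}(d_1,\ldots,d_m)$ is, in some enumeration, a literal copy of a table from $\str{A}$: tables inside the court $\str{C}$ are inherited from $\str{A}$ directly, tables set during the live-part construction (Cases 2--4) copy the pattern table, and tables assigned by the completion step copy $\mathrm{tb}_{\str{A}}(e_1,\ldots,e_m)$ for distinct $e_i$ of matching $1$-types. Choose such a pattern $e_1,\ldots,e_m$ and define $a_i := e_k$ whenever $a_i' = d_k$. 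Then $(a_1,\ldots,a_{l_j})$ has the same $1$-types and equality pattern as $(a_1',\ldots,a_{l_j}')$, and its live subset inherits the same table. Since $\str{A}\models\phiu_j[a_1,\ldots,a_{l_j}]$, Observation \ref{o:satisfaction} yields $\str{A}'\models\phiu_j[a_1',\ldots,a_{l_j}']$.

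The main obstacle I anticipate is not conceptual but bookkeeping: one must verify that every table arising in $\str{A}'$, whether inherited from the court, set during a witness live-part construction, or chosen during completion, is a copy of a table of $\str{A}$ on elements of matching $1$-types, and that the different sources defining the same table never disagree on overlaps. The preceding Claim handles clashes between distinct witness live parts; the completion step, by definition, touches only tuples whose table has not yet been defined; and the court, being inherited unchanged from $\str{A}$, is automatically consistent with any subsequent copy. Together these assemble into a well-defined ``pattern correspondence'' from defined tables in $\str{A}'$ to tables in $\str{A}$, which is precisely what the two verifications above require.
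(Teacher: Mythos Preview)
Your approach mirrors the paper's, and the existential half is essentially identical to it. But your universal argument has a gap. You enumerate the \emph{full} set $\{d_1,\ldots,d_m\}$ of distinct elements of the tuple and argue that $\mathrm{tb}_{\str{A}'}(d_1,\ldots,d_m)$ is a copy of some $\mathrm{tb}_{\str{A}}(e_1,\ldots,e_m)$. That much is true, but Observation~\ref{o:satisfaction} requires the table on the \emph{live} subset to match, and the live subset may be a proper subset of $\{d_1,\ldots,d_m\}$. Since a $k$-table records only atoms using \emph{exactly} the $k$ given elements, the table on a proper subset of the $d_i$'s was defined independently in the construction---possibly copied from an entirely different tuple of $\str{A}$---so your claim ``its live subset inherits the same table'' is unjustified for the pattern $(e_1,\ldots,e_m)$ you chose.

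The fix is to reverse the order, which is precisely what the paper does: first take $b_1',\ldots,b_l'$ to enumerate $\mbox{live}(\phiu_j[a_1',\ldots,a_{l_j}'])$ and observe (by exactly your case analysis on court/witnesses/completion) that $\mathrm{tb}_{\str{A}'}(b_1',\ldots,b_l')$ is a copy of some $\mathrm{tb}_{\str{A}}(b_1,\ldots,b_l)$ with matching $1$-types; \emph{then} extend $b_1,\ldots,b_l$ to a full list of distinct pattern elements in $A$ by choosing, for each remaining distinct $d_k$, any element of $A$ of the right $1$-type (possible since non-royal types have at least $n$ realizations in $\str{A}$ and kings are preserved). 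Now the live-subset table matches by construction, and Observation~\ref{o:satisfaction} applies.
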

\begin{proof}
Let us first argue that all existential requirements  are fulfilled.
Consider an arbitrary element $a' \in A$ and  a conjunct $\forall x \exists y_1 \ldots y_{k_i} \phie_i(x, y_1, \ldots, y_{k_i})$ of $\phi$. If $a' \in K \cup D$, then the witness structure  
for $a'$ and $\phie_i$ is provided in $\str{C}$ by the elements of $C_{a',i}$.
If $a' \in (C \setminus (K\cup D)) \cup E \cup F \cup G$, then
the live part of a  witness structure is either secured in $\str{D}$ or according to 
one of the four cases in the step \emph{Witnesses}. 
We consider the case where the live
part is not in $\str{D}$ (the arguments involving $\str{D}$ are similar).
Assume that the pattern element for $a'$ is $a$ and the witness structure for $a$ and $\phie_i$ is
$\str{B}_{a,i}$, where $B_{a,i}=\set{a, r_1, \ldots, r_{k'}, b_1, \ldots, b_{l'}}$.
Here the elements $r_i$ are kings
and the elements $b_i$ non-kings.
Assume that the universe of the live part of $\str{B}_{a,i}$ is $\bar{B}_{a,i}=\set{a, r_1, \ldots, r_{k}, b_1, \ldots, b_{l}}$, $l \le l'$, $k \le k'$.
(Note that in the case we are considering, $a$ is necessarily a member of $\bar{B}_{a,i}$). A witness structure for $a'$ can now be formed using the elements $a',r_1, \ldots, r_{k'}$,
the elements $b_1', \ldots, b_l'$ (if any) chosen
according to \emph{Case 3} or $\emph{Case 4}$, and elements $b'_{l+1}, \ldots, b'_{l'}$ from, say, the set $E$, 
whose $1$-types are equal to the $1$-types of $b_{l+1}, \ldots, b_{l'}$. Note that we always have sufficiently many elements with appropriate $1$-types in $E$.
Due to Observation \ref{o:satisfaction}, the substructure $\str{A}' \restr \set{a', r_1, \ldots, r_{k'}, b'_1, \ldots, b'_{l'}}$ is indeed a an appropriate witness structure.

Let us then argue that the universal conjuncts of $\phi$ are satisfied. Consider a
conjunct $\forall x_1 \ldots x_k \phiu_i(x_1, \ldots, x_k)$ and a tuple of 
(not necessarily distinct) elements $a'_1, \ldots, a_k'$ from $A'$. 
We must show that $\str{A}' \models \phiu_i[a_1', \ldots, a_k']$. 
Let $b_1', \ldots, b_l'$ be the elements of $\mbox{live}(\phiu_i[a'_1, \ldots, a_k'])$.
There are three different ways
in which $\mathrm{tb}_{\str{A}'}(b_1', \ldots, b_l')$
can become defined: if $l=1$, then $\mathrm{tb}_{\str{A}'}(b_1', \ldots, b_l')$
is just a $1$-type, and the table is fixed in step \emph{Universe}, and if $l>1$, then the table
is fixed either according to step \emph{Witnesses} or step \emph{Completion}. In each of the cases there are
distinct elements $b_1, \ldots, b_l \in A$ such that $tp_\str{A}(b_i)=\tp_{\str{A}'}(b'_i)$ for $1 \le i \le l$ and
$\mathrm{tb}_{\str{A}'}(b_1', \ldots, b_l')=\mathrm{tb}_{\str{A}}(b_1, \ldots, b_l)$.
Let $b_1', \ldots, b_l', c'_1, \ldots, c'_s$ be a list of all distinct elements of $\set{a'_1, \ldots, a_k'}$.
It is readily verified that the tuple $b_1, \ldots, b_l$ can be extended by elements $c_1, \ldots, c_s$ to a tuple
of distinct elements of $A$ with $\tp_\str{A}(c_i)=\tp_\str{A'}(c_i')$ for $1 \le i \le s$. 
Thus we can form a sequence of (not necessarily distinct) elements $a_1, \ldots, a_k$ of $A$ corresponding
to $a'_1, \ldots, a'_k$ in such a way that $a'_i=b'_j\Leftrightarrow a_i=b_j$ and
$a'_i=c'_j\Leftrightarrow a_i=c_j$. Obviously $\str{A} \models \phiu_i[a_1, \ldots, a_k]$. By Observation \ref{o:satisfaction},
it follows that $\str{A}' \models \phiu_i[a_1', \ldots, a_k']$. 
\end{proof}
We have  proved the following theorem.

\begin{theorem}
\UF{} has the finite model property. Moreover, every satisfiable \UF{} formula
$\phi$ has a model whose size is bounded exponentially in $|\phi|$. 
\end{theorem}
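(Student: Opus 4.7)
The plan is to collect everything established in the preceding paragraphs into a single argument. First I would use the preceding proposition to assume, without loss of generality, that $\phi$ is in the generalized Scott normal form of Equation (\ref{normalform}), noting that the translation is polynomial, preserves satisfiability, and preserves the class of domain sizes on which satisfaction is possible. I would then fix any model $\str{A}\models\phi$ (possibly infinite) and set $n$ to be the width of $\phi$, so that $n\le |\phi|$.

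Next I would invoke the court construction verbatim. The set $K$ of kings has size at most $(n-1)|\AAA|$; the set $D$ of live parts of free witness structures contributes at most $(n-1)\mse|\AAA|$ further elements; closing $K\cup D$ under one witness structure per pair $(a,\phie_i)$ yields $|C|\le n((n-1)+(n-1)\mse)|\AAA|\le 2|\phi|^3 2^{|\phi|}$. Adjoining the three disjoint sets $E,F,G$, each of size $(n+\mse)|\AAA|\le 2|\phi|2^{|\phi|}$, produces a universe $A'$ of size at most $8|\phi|^3 2^{|\phi|}$, which is the required exponential bound and makes $\str{A}'$ in particular finite, so the finite model property follows as soon as $\str{A}'\models\phi$ is established.

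To finish I would appeal to the two claims already proved. The first claim shows that the \emph{Witnesses} step assigns tables without conflict: the rotation of pattern choices through $E,F,G$ guarantees that for $a'\neq a''$ the reserved tuples are element-disjoint outside the kings, while the use of ``early'' indices $i\le\mse$ for the designated element $b_1'$ and ``late'' indices $s>\mse$ for the supporting $b_2',\dots,b_l'$ rules out accidental collisions for the same $a'$ across different $\phie_i$. The \emph{Completion} step then fills the remaining tables by copying $k$-tables from $\str{A}$ along a $1$-type-preserving injection, which is possible precisely because no extra kings were introduced. Finally, $\str{A}'\models\phi$ follows from the second claim: every existential conjunct is witnessed either inside the court or by a copied witness structure, and every universal conjunct holds because its evaluation depends, by Observation~\ref{o:satisfaction}, only on $1$-types, equality pattern, and the live $k$-table, all of which have been inherited from a matching tuple in $\str{A}$.

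The main obstacle, as already anticipated in the construction, is the conflict analysis in \emph{Witnesses}: one has to ensure that when two different elements $a',a''\in A'$ independently reserve live parts of their witness structures, the induced table definitions are either disjoint or forced to agree. This is exactly the content of the first claim, and the ``early/late'' index separation combined with the $E\to F\to G\to E$ rotation is the crucial device; once this is in place, the rest of the argument is routine bookkeeping via Observation~\ref{o:satisfaction}.
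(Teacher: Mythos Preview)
Your proposal is correct and follows essentially the same approach as the paper: you recapitulate precisely the construction the paper lays out before the theorem (normal form reduction, court with kings $K$ and the set $D$, the three rotation sets $E,F,G$, the \emph{Witnesses} and \emph{Completion} steps) and then cite the two claims and Observation~\ref{o:satisfaction} exactly as the paper intends. The size bounds and the conflict analysis you describe match the paper's argument line by line.
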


We can now prove the following theorem.
\begin{theorem}\label{complexitytheorem}
The satisfiability problem (=\hspace{0.6mm}finite satisfiability problem) for \UF{} is $\mathrm{NEXPTIME}$-complete.
\end{theorem}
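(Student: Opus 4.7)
The plan is to combine the exponential small model property just established with a polynomial reduction from $\FOt$-satisfiability in order to obtain both the upper and the lower bound.

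For the upper bound, given an input $\UF{}$-formula $\phi$, I would first invoke the preceding normal-form proposition to produce, in polynomial time, an equi-satisfiable formula $\phi'$ in generalized Scott normal form of size polynomial in $|\phi|$. The preceding theorem then guarantees that if $\phi'$ is satisfiable, it has a model $\str{A}'$ of cardinality at most $2^{p(|\phi|)}$ for some fixed polynomial $p$. The nondeterministic algorithm guesses such a structure by guessing the $1$-type of each element and, for each tuple of at most $n$ distinct elements (where $n \le |\phi|$ is the width of $\phi'$), the corresponding $k$-table over the relation symbols of $\phi'$; since every atom appearing in $\phi'$ has arity at most $n$, this description of $\str{A}'$ has exponential size. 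The guessed structure is then verified against $\phi'$: each universal conjunct $\forall x_1 \ldots x_{l_i} \phiu_i$ is checked by iterating over all at most $|A'|^{l_i}$ tuples, and each existential conjunct $\forall x \exists y_1 \ldots y_{k_i} \phie_i$ is checked by, for every $a \in A'$, searching over at most $|A'|^{k_i}$ candidate witness tuples. By Observation \ref{o:satisfaction}, each individual evaluation of a $\UF{}$-matrix on a concrete tuple depends only on the $1$-types, the equality pattern, and a single $k$-table, all of which are already recorded; hence each evaluation is polynomial. Summing up, the total running time after the guess remains exponential in $|\phi|$, placing the problem in $\NExpTime$.

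For the lower bound, I would invoke the $\NExpTime$-hardness of $\FOt$-satisfiability over binary signatures established in \cite{GKV97}. As noted in the introduction, on vocabularies of maximal arity two the equality-free fragments of $\FOt$ and $\UF{}$ coincide, and the standard hardness construction for $\FOt$ uses only binary relations and no equality; consequently every such $\FOt$-sentence is syntactically already a $\UF{}$-sentence, and the identity map is a polynomial reduction from $\FOt$-satisfiability to $\UF{}$-satisfiability. The coincidence of satisfiability and finite satisfiability is an immediate consequence of the finite model property asserted in the preceding theorem, so the same bounds apply to both problems.

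The principal conceptual work lies entirely in the preceding model construction; the remaining obstacle here is really just the bookkeeping required to represent and verify an exponential-size structure within exponential time. The key observation making this work is that no atom in $\phi'$ has arity greater than the width $n \le |\phi|$, so only $k$-tables on tuples of size at most $n$ need ever be stored or inspected, keeping both the representation of $\str{A}'$ and the verification procedure well within the $\NExpTime$ budget.
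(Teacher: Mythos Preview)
Your proposal is correct and follows essentially the same approach as the paper: guess-and-check an exponential model for the upper bound, and inherit the lower bound from $\FOt$. The only notable differences are cosmetic---the paper cites \cite{Lew80} rather than \cite{GKV97} for the $\FOt$ lower bound, and your detour through equality-free fragments is unnecessary since $\FOt \subseteq \UF{}$ outright (every $\FOt$-formula, with or without equality, is already syntactically a $\UF{}$-formula).
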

\begin{proof}
\noindent
The lower bound follows from the $\mathrm{NEXPTIME}$ lower bound for \FOt{} from \cite{Lew80}.
For the upper bound,
we translate the input formula $\phi$ to an equisatisfiable formula
$\phi'$ in generalized Scott normal form.
Suppose $\phi'$ is satisfiable.
We guess an exponentially bounded model $\str{A}$ of $\phi'$ (note that not only the universe of
$\str{A}$ is bounded exponentially, but also the description
of $\str{A}$, since we are dealing only with at most $|\phi'|$ relations of arity at most $|\phi'|$),
and verify that it is indeed a model. The last task can be carried out in an exhaustive way: for each $a \in A$ and
each $\phie_i$, guess which elements form a witness structure and check that
they indeed form a required witness structure; for each conjunct $\forall x_1, \ldots, x_k \phiu_i(x_1, \ldots, x_k)$,
enumerate all tuples $a_1, \ldots, a_k$ of $A$ and check that $\str{A} \models \phiu_i[a_1, \ldots, a_k]$.
\end{proof}


\section{Expressivity}\label{expressivitysection}
In this section we compare the expressivity of \UF{} with
the expressivities of \FOt{} and $\mathrm{FOC}^2$.
Clearly \UF{} contains \FOt{}, and it is not hard to see that
the inclusion is strict; equalities can be used freely in $\UF{}$,
and for example the property that there are precisely
two elements in a unary relation $P$ is expressible in
$\UF{}$ but not in $\mathrm{FO}^2$.
The expressivities of $\UF{}$ and $\mathrm{FOC}^2$ are
related as follows.
%
%

\begin{theorem}\label{generalexpressivity}
$\UF{}$ and $\mathrm{FOC}^2$ are incomparable in expressivity.
\end{theorem}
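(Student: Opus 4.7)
The plan is to prove the incomparability by exhibiting, in each direction, a sentence of one logic for which no equivalent sentence exists in the other. Both separating sentences are easy to write down; the work is in verifying inexpressibility in the other formalism.

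For $\UF{}\not\leq \mathrm{FOC}^2$, I would exploit the arity flexibility of $\UF{}$. With a single ternary symbol $R$, take the $\UF{}$-sentence $\phi_1 := \exists x\exists y\exists z\,(R(x,y,z)\wedge x\neq y\wedge y\neq z\wedge x\neq z)$, obtained from a single existential block with live set $V=\{x,y,z\}$, $F=\{R(x,y,z)\}$, and the three inequalities placed as equality atoms in the Boolean combination (rule 4 of the definition). To show no $\mathrm{FOC}^2$-sentence is equivalent, compare $\str{M}$, a $3$-element structure with a single triple $(a,b,c)\in R^{\str{M}}$ of pairwise distinct entries, against $\str{M}'$ identical to $\str{M}$ but with $R^{\str{M}'}=\emptyset$. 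Every $\mathrm{FOC}^2$-atom built from $R$ has the form $R(u,v,w)$ with $u,v,w\in\{x,y\}$, and hence forces at least two of its three arguments to coincide; since the unique triple in $R^{\str{M}}$ has all entries distinct, every such atom is vacuously false in $\str{M}$, exactly as in $\str{M}'$. A routine induction on $\mathrm{FOC}^2$-formulas (equivalently, a two-pebble counting game) yields $\str{M}\equiv_{\mathrm{FOC}^2}\str{M}'$, while $\phi_1$ separates them.

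For $\mathrm{FOC}^2\not\leq\UF{}$, I would use counting over a binary symbol $R$. Take $\phi_2 := \forall x\,\exists^{\geq 2}y\,R(x,y)$ and, for some $k\geq 2$ even, compare $\str{N}$, the disjoint union of $k$ copies of a mutual-edge triangle on three vertices (no self-loops), with $\str{N}'$, the disjoint union of $3k/2$ copies of a mutual-edge pair, so that $|\str{N}|=|\str{N}'|=3k$. Then $\str{N}\models\phi_2$ (every vertex has out-degree $2$) and $\str{N}'\not\models\phi_2$ (every vertex has out-degree $1$); yet both realize the same $1$-type (the unique trivial one) and the same set of $2$-types (the mutual-edge type within components, and the empty type across components). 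I would then prove $\str{N}\equiv_{\UF{}}\str{N}'$ by an inductive argument leveraging the uniformity condition: inside any $\UF{}$ quantifier block on variables $X$, all relational atoms in the matrix lie in one $V$-uniform set, so the atoms $R(x,y_1)$ and $R(x,y_2)$ never both occur, and any reference to a ``second successor'' of $x$ must be routed through a subformula of one free variable, which can neither inherit $x$ nor enforce distinctness from an earlier witness.

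The ternary direction is essentially routine. The main obstacle lies in making the uniformity argument rigorous for the second direction. I would formalize it via an EF-style pebble game characterizing $\UF{}$-equivalence: a round lets Spoiler open a quantifier block of some width, designate a live set $V$, and pick witnesses together with the $|V|$-table realized on them in one structure; Duplicator must respond in the other while preserving $1$-types, equality patterns, and the $|V|$-table, and must inductively match the truth values of all subformulas of at most one free variable. A winning Duplicator strategy on $\str{N},\str{N}'$ then follows from two observations: every $V$ of size at most two supports the same set of realizable tables in both structures (by the coincidence of their $2$-type spectra), and every $\UF{}$-subformula of one free variable depends only on the $1$-type of its argument, which is the same in both models. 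Establishing this game and its adequacy for $\UF{}$ is the delicate technical component of the proof.
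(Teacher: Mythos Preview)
Your direction $\UF{}\not\leq\mathrm{FOC}^2$ is essentially the paper's: both exploit a ternary atom that two-variable logic cannot touch (the paper simply takes $\exists x\exists y\exists z\,R(x,y,z)$ without the inequalities, but the idea is identical).

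For $\mathrm{FOC}^2\not\leq\UF{}$ you take a genuinely different route. The paper's argument is a two-line application of the finite model property already established earlier: if $\UF{}$ could define ``every node has in-degree at most one'', then conjoining this with $\forall x\exists y\,R(x,y)\wedge\exists x\forall y\,\neg R(y,x)$ (both $\UF{}$-sentences) yields a satisfiable $\UF{}$-sentence with only infinite models, contradicting the FMP. No structural analysis of $\UF{}$ is needed at all.

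Your approach instead compares two concrete finite structures and argues they are $\UF{}$-indistinguishable. The intuition is correct---uniformity forces all binary $R$-atoms in a block to share one two-element live set, so no single block can see two distinct $R$-successors of the same point---and the choice of mutual-edge triangles versus mutual-edge pairs is sound (same $1$-types, same realized $2$-tables, same cardinality). But turning this into a proof requires the EF-style game you sketch, and that sketch has a circularity: your Duplicator condition ``inductively match the truth values of all subformulas of at most one free variable'' refers to formulas, whereas a game characterization should be purely structural. You would need to replace this with a genuine back-and-forth relation closed under the block move, and then prove adequacy by induction on formulas---essentially developing a bisimulation notion for $\UF{}$. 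This is doable, and you rightly flag it as the delicate part, but it is the bulk of the work. Note also that your structures must grow with the width of the formula under test, since a block of width $w$ carrying a full $\mathit{diff}$ conjunct needs $w$ distinct elements; so your argument really produces, for each candidate $\UF{}$-sentence $\psi$, a pair $\str{N}_\psi,\str{N}'_\psi$ that $\psi$ fails to separate.

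In short: your plan can be completed, but it is substantially longer than the paper's, which sidesteps all direct expressivity analysis by leveraging the finite model property.
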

\begin{proof}
It is straighforward to establish that $\mathrm{FOC}^2$ cannot express
the $\UF{}$-sentence
$\exists x \exists y \exists z R(x,y,z),$
and therefore $\UF{}\not\leq \mathrm{FOC}^2$.
To show that $\mathrm{FOC}^2\not\leq\UF{}$,
let $R$ be a binary relation symbol
and consider models over the signature $\{R\}$.
We claim that $\UF{}$ cannot express the $\mathrm{FOC}^2$-definable
condition that the in-degree (w.r.t. the relation $R$) at every node is at most one.  
Assume $\varphi(R)$ is a $\UF{}$-formula that
defines the condition. Consider the conjunction
$\varphi(R)\ \wedge\ \forall x\exists y R(x,y)\ \wedge\ \exists x \forall y\neg R(y,x).$
It is easy to see that this formula does not have a finite model,
and thereby the assumption that $\UF{}$ can express $\varphi(R)$ is false.
%
%
\end{proof}
%
%
%

The rest of this section is devoted to the scenario in which
the signature contains only unary and binary relation symbols.
We will show that in such a case the expressivity of \UF{} lies strictly
between \FOt{} and $\mathrm{FOC}^2$.

%
%

%
%
%
%

%
Let $\tau$ be a finite relational vocabulary.
Let \emph{$\beta$} be a $2$-table over $\tau$,
and let $x$ and $y$ be \emph{distinct} variables.
Let $S$ be the set of atoms obtained from $\beta$ by replacing 
all occurrences of the variables $v_1$ and $v_2$ in $\beta$ by $x$ and $y$,
respectively. We call $S$ a \emph{binary $\tau$-diagram} in the variables $(x,y)$,
and denote it by $\beta(x,y)$.
We identify binary diagrams and conjunctions over them.
A \emph{binary $\tau$-arrow} in the variables $x,y$
is an atomic formula $R(x,y)$ (or $R(y,x)$), where $R\in\tau$.
Notice that neither equality statements nor atoms of the form $R(x,x),R(y,y)$ are binary $\tau$-arrows.
It is straightforward to show that if $\varphi$ is a Boolean
combination of binary $\tau$-arrows in the variables $x,y$, then $\varphi$ is
equivalent to the disjunction of $\tau$-diagrams $\beta(x,y)$ that entail $\varphi$,
i.e., $\beta(x,y)\models \varphi$.
Notice that $\bigvee\emptyset = \bot$ is a legitimate disjunction of diagrams.
Let $\{x_0,...,x_k\}$ be a (possibly empty) set of distinct variables.
%
%
%
%
An \emph{identity literal} over $\{x_0,...,x_k\}$ is a
formula of the type $x_i = x_j$ or $\neg x_i = x_j$,
where $i,j\in [0,k]$. An identity literal 
is \emph{non-trivial} if the variables in it are different.
An \emph{identity profile} over $\{x_0,...,x_k\}$, or a \emph{$\{x_0,...,x_k\}$-profile}, is a
maximal satisfiable set of non-trivial identity literals over $\{x_0,...,x_k\}$.
We identify identity profiles 
and conjunctions over them.
We let $\mathit{diff}(x_0,...,x_k)$
denote the conjunction of inequalities $x_i\not=x_j$, where $i,j\in [0,k]$, $i\not=j$.
An identity profile is a  
\emph{discriminate profile} if it is 
the formula $\mathit{diff}(x_0,...,x_k)$ for some 
set $\{x_0,...,x_k\}$ of distinct variables.
Let $I$ be a set of identity literals over $\{x_0,...,x_k\}$.
Let $\varphi$ be a $\{x_0,...,x_k\}$-profile.
We say that $\varphi$ is \emph{consistent
with $I$} if $\varphi\models\bigwedge I$.
A $\UF{}$-formula $\varphi$
is a \emph{block formula} if $\varphi$ is of the type
$\exists\overline{x}\, \psi$ or $\neg\exists\overline{x}\, \psi$.
Here $\exists \overline{x}$ denotes a vector of one or more existentially quantified variables.
formulas $\exists\overline{x}\, \psi$ are called \emph{positive} blocks,
while formulas $\neg\exists\overline{x}\, \psi$ are \emph{negative} blocks.
A $\UF{}$-formula
is \emph{simple} if it is a literal or a
block formula.
Let $\tau$ be a finite relational vocabulary
with the arity bound two.
Let $x_0,...,x_k$ be distinct variable symbols.
Let $\varphi\, :=\, \exists x_1...x_k\, \psi$
be a $\UF$-formula over $\tau$. 
Let $x,y\in\{x_0,...,x_k\}$ be distinct variables.
We call $\varphi$ a \emph{$\tau$-diagram block}
if the formula $\psi$ is a conjunction
\vspace{-3pt}
$$\beta(x,y)\ \wedge\ \mathit{diff}(x_0,...,x_k)\wedge \psi_0(x_0)\
\wedge...\wedge \psi_k(x_k),$$

\vspace{-3pt}

\noindent
where $\beta(x,y)$ is a binary $\tau$-diagram in the variables $(x,y)$,
and every formula $\psi_i(x_i)$
is a conjunction of simple formulas $\psi'$ such that $\mathit{free}(\psi')\subseteq\{x_i\}$.
Furthermore, if $\chi(x)$ is a
$\tau$-diagram block with the free variable $x$, then also the formula
$\exists x\, \chi(x)$ is a $\tau$-diagram block.
%
%
%
A $\UF$-formula $\varphi$ is said to be in \emph{diagram normal form}
if for every positive block formula $\varphi'$ that occurs as a
subformula in $\varphi$, there is a $\tau$ such that 
$\varphi'$ is a $\tau$-diagram block.
\begin{lemma}\label{existentialtypelemma}
Every positive block formula is equivalent to a disjunction of diagram blocks.
\end{lemma}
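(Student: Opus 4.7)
My plan is to proceed by induction on the quantifier rank of the positive block $\varphi = \exists x_1 \ldots x_k\, \psi$. As a first step I apply the inductive hypothesis to every positive block that is strictly nested inside $\psi$ and replace each with an equivalent disjunction of diagram blocks; the negations arising from the surrounding Boolean context turn each such disjunction into a conjunction of negative blocks, which are themselves simple. Thus I may assume that $\psi$ is a Boolean combination whose atomic constituents are of four kinds: (i) unary literals, (ii) equality literals on $\{x_0, \ldots, x_k\}$, (iii) binary arrows drawn from the single $V$-uniform set $F$ used to form $\psi$ via rule~4, and (iv) simple subformulas (positive diagram blocks or negative blocks) each with at most one free variable.

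Next, I put $\psi$ in disjunctive normal form and distribute $\exists x_1 \ldots x_k$ over the top-level disjunction, so that it suffices to handle one conjunction $D$ of these constituents at a time. I then partition $D$ by its equality structure: for each $\{x_0, \ldots, x_k\}$-profile $\pi$ consistent with the equality literals already occurring in $D$ I conjoin $\pi$ with $D$ (discarding unsatisfiable cases), and substitute a class representative $y_i$ for each equivalence class of $\pi$, with $y_0$ standing for the class of $x_0$. What remains over the distinct representatives $\{y_0, \ldots, y_m\}$ is $\mathit{diff}(y_0, \ldots, y_m)$ together with unary literals, simple subformulas, and the image under substitution of the binary arrow portion of $D$.

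The decisive observation is that, by uniformity, all binary arrows in $D$ come from the same set $F$ with variable support $V \subseteq \{x_0, \ldots, x_k\}$, so after substitution either (a) $V$ survives as a pair $(y_p, y_q)$ of distinct representatives and the remaining arrows form a Boolean combination of arrows on $(y_p, y_q)$, which by the remark preceding the lemma equals a disjunction of binary $\tau$-diagrams $\beta(y_p, y_q)$ that I distribute outward, or (b) $V$ collapses to a single class, in which case each $R(x_i, x_j) \in F$ turns into a unary atom $R(y, y)$ and is absorbed into the unary part. In case (b), and more generally whenever no surviving binary arrow is present, I insert the tautology $\bigvee_\beta \beta(y_p, y_q)$ on any fixed pair of distinct representatives (available because $m \geq 1$). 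Grouping the unary literals and simple subformulas by their unique free variable into conjunctions $\psi_i(y_i)$, each resulting disjunct takes precisely the shape $\exists y_1 \ldots y_m\bigl[\beta(y_p, y_q) \wedge \mathit{diff}(y_0, \ldots, y_m) \wedge \psi_0(y_0) \wedge \cdots \wedge \psi_m(y_m)\bigr]$ required by the definition of a $\tau$-diagram block, with the recursive clause $\exists x\,\chi(x)$ handling the sentence case.

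The main obstacle I anticipate is twofold: first, keeping the uniformity condition in sync with the identity profile, i.e.\ verifying that after substitution the remaining binary arrows indeed all sit on one surviving pair, so that a single $\beta(y_p, y_q)$ clause suffices inside each disjunct; and second, treating the degenerate case in which the profile $\pi$ identifies every variable ($m = 0$), where the primary form of a diagram block no longer applies and one must fall back on the recursive $\exists x\,\chi(x)$ clause, padding the substituted formula with an auxiliary existential witness to stay within the syntactic shape of the normal form.
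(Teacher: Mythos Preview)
Your approach is essentially the paper's: put the matrix in DNF, distribute the existential prefix over the disjuncts, expand each conjunction by all consistent identity profiles, substitute class representatives so the profile becomes discriminate, and replace the (unique, by uniformity) surviving Boolean combination of binary arrows by the disjunction of $\tau$-diagrams entailing it.

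Two minor remarks. First, the preliminary induction on quantifier rank is unnecessary for this lemma: the definition of a $\tau$-diagram block only requires each $\psi_i(x_i)$ to be a conjunction of \emph{simple} formulas, and any nested block with at most one free variable is already simple, so nested positive blocks may be left untouched inside the $\psi_i$. The paper does not recurse here at all and defers the structural induction to the corollary that follows. Second, your proposed fix for the $m=0$ degenerate case---padding with an auxiliary existential witness---does not work as stated, since every diagram block carries $\mathit{diff}(x_0,\ldots,x_k)$ with $k\ge 1$ and is therefore unsatisfiable on one-element models, whereas the one-variable residue $\gamma(x_0)$ need not be. The paper's own proof glosses over this same corner (after renaming, $\beta_j'$ is asserted to be ``a binary $\tau$-diagram or $\top$'', which is not literally true when both live variables collapse to $x_0$); the issue is harmless for the downstream corollary, because the diagram-normal-form condition applies only to positive block \emph{subformulas}, and a one-variable residue contains none.
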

\begin{proof}
Consider a block formula $\varphi(x_0) := \exists x_1...\exists x_k\, \psi$
with the free variable $x_0$.
Let $\tau$ be the set of relation symbols that occur in $\psi$.
First put $\psi$ into disjunctive normal form so that
we obtain the formula
$\chi_1\vee...\vee\chi_l$,
where each disjunct $\chi_i$ is a conjunction
of simple formulas.
We have 
$$\varphi(x_0)\ \equiv\ \varphi'(x_0)\ :=\ \exists x_1...\exists x_k\bigl(\, \chi_1\vee...\vee\chi_l\ \bigr).$$
Now distribute the existential quantifier prefix of $\varphi'(x_0)$ over
the disjunctions. We have
\begin{equation}\label{varphidashdash}
\varphi(x_0)\ \equiv\ \varphi''(x_0)\ :=\ \bigvee_{i\, \in\, \{1,...,l\}}\ \exists x_1...\exists x_k\, \chi_i.
\end{equation}
Consider an arbitrary disjunct $\exists x_1...\exists x_k\, \chi_i$
of $\varphi''(x_0)$.
Recall that $\chi_i$ is a conjunction 
simple formulas.
Let $I$ be the set of
conjuncts of $\chi_i$ that are non-trivial identity literals.
%
%
%
%
%
%
%
Let $\Pi$ be the set of all
$\{x_0,...,x_k\}$-profiles consistent with $I$.
Let $J$ be the set of conjuncts of $\chi_i$ that are not in $I$.
Thus 
\begin{equation}\label{new}
\chi_i\ \equiv\ \chi_i'\ :=\ \bigvee_{\pi\, \in\, \Pi}\bigl(\, \pi\wedge\, \bigwedge J\, \bigr).
\end{equation}
%
%
%

%
%
%
%
%
%
%
%
%
%
%

%
Write the formula $\bigwedge J$ in the form $\alpha \wedge \beta$, where $\alpha$ is a
conjunction of formulas with at most one free variable and $\beta$ is a
conjunction of literals with two free variables.
%
%
Notice that due to the uniformity condition, $\beta$ has exactly
two free variables, or, alternatively, $\beta$ is the formula $\bigwedge\emptyset = \top$.
Let $\beta_1\vee...\vee\beta_m$ be the
disjunction of $\tau$-diagrams that entail $\beta$. Thus 
\begin{equation}\label{bigwedgej}
\bigwedge J\ \equiv\ \alpha\, \wedge \bigvee\limits_{j\, \in\, [1,m]}\beta_j.
\end{equation}
Combining Equations \ref{new} and \ref{bigwedgej},
we infer that 
\begin{equation*}
\chi_i\ \equiv\ \bigvee\limits_{
\substack{\pi\, \in\, \Pi\\
j\, \in\, [1,m]}}\bigl(\, \pi\, \wedge\, \alpha\ \wedge\, \beta_j\, \bigr).
\end{equation*}
Therefore we have
\begin{equation}\label{dontcareaboutthename}
\exists x_1...\exists x_k\, \chi_i\ \equiv\ \chi_i''\ :=\ \bigvee\limits_{
\substack{\pi\, \in\, \Pi\\
j\, \in\, [1,m]}}\exists x_1...\exists x_k\bigl(\, \pi\wedge\, \alpha\, \wedge\, \beta_j \bigr).
\end{equation}
%
%
%
%
%
%

%
Consider an arbitrary disjunct $\exists x_1...\exists x_k\bigl(\, \pi\wedge\, \alpha\, \wedge\, \beta_j \bigr)$
of $\chi_i''$. The profile formula $\pi$ may 
contain unnegated literals. We want to get rid of them. 
If $\pi$ contains an unnegated identity $x=y$ as a conjunct, we get
rid of the variable $y$ altogether by erasing the conjunct $x = y$
and renaming variables in
the remaining conjuncts of $\pi\wedge\, \alpha\, \wedge\, \beta_j$.
The renaming is done such that $x_0$ is never erased.
%
%
By renaming variables in this fashion, we obtain a
formula $\pi'\wedge\, \alpha'\, \wedge\, \beta_j'$
such that 
\begin{equation}\label{chi-i}
\exists x_1...\exists x_k\bigl(\, \pi\wedge\, \alpha\, \wedge\, \beta_j \bigr)\
\equiv\ \exists y_1...\exists y_n\bigl(\, \pi'\wedge\, \alpha'\, \wedge\, \beta_j' \bigr),
\end{equation}
where $\{y_1,...,y_n\}\subseteq \{x_1,...,x_k\}$, $\pi'$ is a discriminate $\{y_1,...,y_n\}$-profile,
$\alpha'$ is a conjunction of simple formulas with at
most one free variable, and $\beta'$ is a binary $\tau$-diagram or
the formula $\top$.
%
%
%
%
%
%
%
%
%
%
By combining Equations \ref{varphidashdash},
\ref{dontcareaboutthename} and \ref{chi-i},
it is easy to see that the original formula $\varphi(x_0)$ is indeed equivalent to some
disjunction of diagram blocks.
We still need to show that positive block formulas \emph{without} free
variables are equivalent to disjunctions of diagram blocks.
Consider a formula $\exists x_0...\exists x_k\, \chi$.
We assume, w.l.o.g., that $k > 0$ and that the variable $x_0$
occurs free in $\chi$. We translate the formula $\varphi'(x_0):= \exists x_1...\exists x_k\, \chi$ in
the way described above to a disjunction $\varphi_1'\vee...\vee\varphi_n'$
of diagram blocks. We then
observe that $\exists x_0\varphi_1'\vee...\vee\exists x_0\varphi_n$ is
equivalent to the original formula $\exists x_0...\exists x_k\, \chi$.
\end{proof}
\begin{corollary}\label{typeform}
Each $\UF{}$-formula is equivalent to a formula in
diagram normal form.
\end{corollary}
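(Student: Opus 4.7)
The plan is to proceed by structural induction on $\UF$-formulas, using Lemma \ref{existentialtypelemma} at the quantifier step. Base cases (unary atoms, identity atoms, and $\bot,\top$) contain no positive block subformula, so they are trivially in diagram normal form. For Boolean combinations, it is immediate that if $\varphi$ and $\varphi_1,\varphi_2$ are in diagram normal form, then so are $\neg\varphi$ and $\varphi_1\wedge\varphi_2$, since the set of positive block subformulas of a negation or conjunction is precisely the union of those of its components.

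The only nontrivial case is a positive block $\exists\overline{x}\, \chi$ (the outermost-quantifier case $\exists x_0\ldots\exists x_k\, \chi$ is handled at the end of the proof of Lemma \ref{existentialtypelemma}, so it reduces to this). By the construction rule (4) of $\UF$, the matrix $\chi$ is a Boolean combination of formulas from a set $U$ together with atoms from a uniform set $F$. Each formula in $U$ is itself a $\UF$-formula of strictly smaller complexity, so the induction hypothesis yields, for every $\xi\in U$, an equivalent $\xi^{*}$ in diagram normal form. Substituting $\xi^{*}$ for $\xi$ throughout $\chi$ produces an equivalent positive block $\exists\overline{x}\, \chi^{*}$ in which every occurrence of a member of $U$ is already in diagram normal form.

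Next, I apply Lemma \ref{existentialtypelemma} to $\exists\overline{x}\, \chi^{*}$, obtaining an equivalent disjunction $\varphi_1\vee\ldots\vee\varphi_n$ of diagram blocks. The crucial observation here is that in the proof of the lemma, the simple formulas appearing in $\chi^{*}$ are treated as opaque ``atomic'' literals while the DNF, identity profile, and binary diagram manipulations are performed; these simple formulas survive unchanged as the conjuncts $\psi_i(x_i)$ of the resulting diagram blocks. Hence, since the simple subformulas were already in diagram normal form, so are all the $\varphi_i$, and therefore so is $\varphi_1\vee\ldots\vee\varphi_n$. For a negative block $\neg\exists\overline{x}\,\chi$, I apply the same reasoning to the inner positive block and then negate the resulting disjunction of diagram blocks.

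The main subtlety, and the step that requires care, is verifying that the transformation performed by Lemma \ref{existentialtypelemma} does not disturb the diagram-normal-form property of the substituted simple subformulas: the lemma groups these into the $\alpha$-part of $\bigwedge J$ and lets them flow unchanged into each $\psi_i(x_i)$, so no new un-normalized positive block is created. With this checked, the corollary follows.
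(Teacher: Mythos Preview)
Your proposal is correct and follows essentially the same approach as the paper, which simply says ``By induction on the structure of formulas, using Lemma~\ref{existentialtypelemma}.'' Your elaboration---applying the induction hypothesis to the subformulas in $U$ first, then invoking the lemma, and noting that the simple one-variable subformulas pass through the lemma's construction into the $\psi_i(x_i)$ unchanged---is a faithful and careful unpacking of that one-line proof.
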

\begin{proof}
By induction on the structure of formulas, using Lemma
\ref{existentialtypelemma}.
\end{proof}
Let $\tau$ be a finite relational vocabulary
with the arity bound 2.
Let $k\in\mathbb{Z}_+$, and let $x_0,...,x_k$ be distinct
variable symbols. Let $\varphi(x_0,...,x_k)\ :=\ \mathit{diff}(x_0,...,x_k) \wedge \psi$,
where $\psi$ is conjunction of $\tau$-literals
%
%
such that the following conditions hold.
\begin{enumerate}
\item
The variables of each conjunct of $\psi$ are in $\{x_0,...,x_k\}$.
\item
If $\psi$ has $R(x,y)$ or $\neg R(x,y)$ as a conjunct, where $R\in\tau$ is a
binary relation symbol and $x,y$ distinct variables,
then $x_0\in\{x,y\}$.
%
%
%
\end{enumerate}
Then we call $\varphi(x_0,...,x_k)$ a \emph{$\tau$-star formula} in the variables $(x_0,...,x_k)$.
The variable $x_0$ is called the \emph{centre variable} of $\varphi$.
Consider then a quantifier-free $\tau$-formula
$\psi(x_0,...,x_k)\ :=\ \mathit{diff}(x_0,...,x_k)\ \wedge\ \beta\ \wedge\ \alpha$
such that the following conditions are satisfied.
\begin{enumerate}
\item
The formula $\beta$ is a conjunction
$\beta_1(x_0,x_1)\wedge...\wedge\beta_k(x_0,x_k)$,
where each $\beta_i(x_0,x_i)$ is a binary $\tau$-diagram in the variables $(x_0,x_i)$.
\item
The formula $\alpha$ is a conjunction $\alpha_0(x_0)\wedge...\wedge\alpha_k(x_k)$,
where each $\alpha_i(x_i)$ is a $1$-type over $\tau$ and in the variable $x_i$.
(The variable $v_1$ is replaced by $x_i$.)
\end{enumerate}
The formula $\psi(x_0,...,x_k)$ is called a \emph{$\tau$-star type}
in the variables $(x_0,...,x_k)$. The variable $x_0$ is the \emph{centre variable}
of the $\tau$-star type.
%
%
It is straightforward to show that every $\tau$-star formula in the variables $(x_0,...,x_k)$ is
equivalent to a disjunction of $\tau$-star types in the variables $(x_0,...,x_k)$.
%
%

%
Let $\varphi(x_0,...,x_k)$ be a $\tau$-star formula
in the variables $(x_0,...,x_k)$.
Then the formula $\exists x_1...\exists x_k\varphi(x_0,...,x_k)$ is
called a \emph{$\tau$-star centre formula} of the width $k$.
Let $\psi(x_0,...,x_k)$ be a $\tau$-star type 
in the variables $(x_0,...,x_k)$.
Then the formula
$\exists x_1...\exists x_k \psi(x_0,...,x_k)$
is called a \emph{$\tau$-star centre type} of the width $k$.
The following lemma follows immediately from the fact that every $\tau$-star formula is
equivalent to a disjunction of $\tau$-star types.
\begin{lemma}\label{starcentertype}
Every $\tau$-star centre formula of the width $k$ is
equivalent to a disjunction of $\tau$-star centre types of
the width $k$.
\end{lemma}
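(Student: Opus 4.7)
The plan is to reduce the lemma to the assertion made just before it, namely that every $\tau$-star formula in variables $(x_0,\ldots,x_k)$ is equivalent to a disjunction of $\tau$-star types in the same variables. Given a $\tau$-star centre formula $\exists x_1 \ldots \exists x_k\, \varphi(x_0,\ldots,x_k)$, I would first apply that fact to the matrix $\varphi$, obtaining an equivalence
\begin{equation*}
\varphi(x_0,\ldots,x_k)\ \equiv\ \bigvee_{i\, \in\, I}\ \psi_i(x_0,\ldots,x_k),
\end{equation*}
where each $\psi_i$ is a $\tau$-star type in $(x_0,\ldots,x_k)$. Then I would simply distribute the existential prefix over the disjunction using the standard equivalence $\exists\overline{x}(\phi \vee \phi')\equiv (\exists\overline{x}\,\phi)\vee(\exists\overline{x}\,\phi')$, yielding
\begin{equation*}
\exists x_1 \ldots \exists x_k\, \varphi(x_0,\ldots,x_k)\ \equiv\ \bigvee_{i\, \in\, I}\ \exists x_1 \ldots \exists x_k\, \psi_i(x_0,\ldots,x_k).
\end{equation*}
By definition, each disjunct on the right is a $\tau$-star centre type of width $k$, and the lemma follows.

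The only non-routine ingredient is the passage from a star formula to a disjunction of star types, but this is already stated as straightforward in the text. If one wished to spell it out, the argument would put the conjunction of literals in the star formula into a normal form by (i) replacing the conjunction of binary literals at each spoke $x_0,x_i$ by the disjunction of binary $\tau$-diagrams $\beta_i(x_0,x_i)$ that entail it, (ii) replacing the conjunction of unary literals about each $x_i$ by the disjunction of $1$-types $\alpha_i(x_i)$ consistent with it, and (iii) distributing the outer conjunction over these disjunctions. Each resulting disjunct is precisely a $\tau$-star type in $(x_0,\ldots,x_k)$, so no genuine obstacle arises; the lemma is a bookkeeping consequence of the preceding disjunctive normal-form statement.
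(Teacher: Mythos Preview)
Your proof is correct and follows exactly the approach the paper indicates: the lemma is stated to follow immediately from the preceding fact that every $\tau$-star formula is equivalent to a disjunction of $\tau$-star types, and your argument spells out precisely this reduction by distributing the existential prefix over the disjunction. The optional elaboration you give for the underlying normal-form fact is also in line with what the paper calls straightforward.
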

A \emph{$2$-type} over $\tau$ is a  
maximal satisfiable set of $\tau$-literals in the variables $v_1$ and $v_2$
(equalities and negated equalities are considered to be $\tau$-literals).
If $\mathcal{T}$ is a $2$-type over $\tau$ and $x,y$ distinct variables,
we let $\mathcal{T}(x,y)$ denote the set obtained from $\mathcal{T}$ by
replacing all occurrences of $v_1$ and $v_2$ in $\mathcal{T}$
by $x$ and $y$, respectively.
%
%
%
Below we identify sets $\mathcal{T}(x,y)$
and conjunctions over them.
Let $\psi$ be a $\tau$-star type in the variables $(x_0,...,x_k)$.
Each pair $(x_0,x_i)$, where $i\in[1,k]$, is 
called a \emph{ray} of $\psi$.
Let $\mathcal{T}$ be a $2$-type over $\tau$.
We say that the ray $(x_0,x_i)$ of $\psi$ \emph{realizes} $\mathcal{T}$
if $\psi\models\mathcal{T}(x_0,x_i)$.
\begin{theorem}
Let $\tau$ be a relational vocabulary with
the arity bound $2$. 
Then $\UF{(\tau)} \leq \mathrm{FOC}^2(\tau)$.
The inclusion is strict if $\tau$ contains a binary symbol.
\end{theorem}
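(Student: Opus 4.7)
The strictness part is immediate from the second half of the proof of Theorem~\ref{generalexpressivity}: the $\mathrm{FOC}^2(\tau)$-property ``the in-degree with respect to the binary symbol $R$ is at most one'' was shown there to have no $\UF{}$-equivalent, and the argument already works over a vocabulary consisting of a single binary symbol $R$.

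For $\UF(\tau) \leq \mathrm{FOC}^2(\tau)$, I induct on the structure of $\UF$-formulas, with inductive hypothesis that every $\UF(\tau)$-subformula with at most one free variable has an equivalent $\mathrm{FOC}^2(\tau)$-formula. Literals and Boolean connectives are routine. By Corollary~\ref{typeform}, I may assume that each positive block subformula being translated is a $\tau$-diagram block
\[
\exists x_1 \ldots x_k \bigl(\beta(x,y) \wedge \mathit{diff}(x_0,\ldots,x_k) \wedge \psi_0(x_0) \wedge \cdots \wedge \psi_k(x_k)\bigr),
\]
in which $\beta(x,y)$ is the sole binary pattern and each $\psi_i$ has free variable contained in $\{x_i\}$. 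By induction, each $\psi_i(x_i)$ is equivalent to some $\widetilde\psi_i(x_i)\in\mathrm{FOC}^2$.

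I split according to whether $x_0$ is one of the endpoints of $\beta$. In the \emph{centred case} $x = x_0$, $y = x_j$, I translate the block to
\[
\widetilde\psi_0(x_0) \wedge \exists y \bigl(y \neq x_0 \wedge \beta(x_0, y) \wedge \widetilde\psi_j(y) \wedge H(x_0, y)\bigr),
\]
where $H(x_0, y)$ is the Hall marriage condition stating that pairwise distinct witnesses outside $\{x_0, y\}$ exist for the remaining $\widetilde\psi_i$ ($i \in [1,k] \setminus \{j\}$). By Hall's theorem, $H$ is equivalent to the conjunction, over $S \subseteq [1,k] \setminus \{j\}$, of ``at least $|S|$ elements outside $\{x_0,y\}$ satisfy $\chi_S := \bigvee_{i \in S}\widetilde\psi_i$''; each such cardinality clause is $\mathrm{FOC}^2$-expressible via an outer Boolean case split on the two values $\chi_S(x_0)$ and $\chi_S(y)$, which reduces the clause to $\exists^{\geq |S|+c} z\, \chi_S(z)$ with $c \in \{0,1,2\}$ recording how many of $x_0, y$ themselves realise $\chi_S$, the inner counting sentence safely reusing one of the two variable slots. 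In the \emph{non-centred case} $x = x_p$, $y = x_q$ with $p, q \in [1,k]$, the binary constraint binds two fresh variables both of which must avoid $x_0$. I quantify $\exists y$ to host $x_q$ (asserting $y \neq x_0$ and $\widetilde\psi_q(y)$) and then rebind the variable holding $x_0$ to host $x_p$; to compensate for the loss of reference to $x_0$ after the rebinding, I perform beforehand an outer Boolean case split on which of $\widetilde\psi_p(x_0)$, $\widetilde\psi_q(x_0)$ and each $\chi_S(x_0)$ holds, so that the ensuing existence assertion for $x_p$ and every Hall-style cardinality clause for the remaining indices become constant-shifted $\exists^{\geq n}$-sentences fitting in the two variable slots.

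The main technical obstacle is exactly this non-centred case, where three elements $x_0, x_p, x_q$ must be kept mutually distinct inside a two-variable fragment. It is resolved by the standard trick of eagerly converting every $\mathrm{FOC}^2$-definable predicate about a soon-to-be-lost live variable into an outer Boolean case split before rebinding that variable. The resulting $\mathrm{FOC}^2(\tau)$-formula is exponentially longer than the original $\UF(\tau)$-formula, but logical equivalence rather than succinctness is all that is required.
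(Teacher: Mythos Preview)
Your argument takes a genuinely different route from the paper's. For the centred case the paper replaces the one-variable subformulas by fresh unary predicates, rewrites the block as a disjunction of \emph{star centre types} (Lemma~\ref{starcentertype}), and expresses each star centre type by counting, for every $2$-type $\mathcal{T}$, how many rays realize $\mathcal{T}$. You bypass all of this by invoking Hall's marriage theorem: once $x_0$ and $x_j$ are fixed, the remaining indices only carry unary constraints, so the existence of pairwise-distinct witnesses is exactly a system-of-distinct-representatives condition, and Hall's criterion turns it into a conjunction of cardinality bounds expressible with counting quantifiers. This is correct (the index side is finite, so ordinary Hall applies even over infinite models) and arguably cleaner; the blowup is exponential in the block width $k$ rather than in the number of $2$-types over $\tau$. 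For the non-centred case the paper reduces to the centred case by making $x_i$ the free variable and then patches the possible clash between the true $x_0$ and the witness's ``$x_j$'' via an $\exists^{\ge 2}$ correction, whereas you work directly with an outer case split.

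Your non-centred case has a small but real gap. After binding $y$ for $x_q$ and rebinding the $x_0$-slot for $x_p$, you need $x_p\neq x_0$; your plan is to replace $\exists x_p(\cdots)$ by a shifted $\exists^{\ge 1+c}x_p\bigl(x_p\neq y\wedge\beta(x_p,y)\wedge\widetilde\psi_p(x_p)\wedge\text{Hall}\bigr)$ with $c=1$ exactly when $x_0$ itself is a valid candidate for $x_p$. But being a valid candidate entails $\beta(x_0,y)$, a \emph{binary} fact about $x_0$ and $y$ that is not determined by the unary case split you list ($\widetilde\psi_p(x_0)$, $\widetilde\psi_q(x_0)$, $\chi_S(x_0)$). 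The fix is immediate and in the spirit of your own closing remark: once $y$ is bound (and before you rebind $x_0$), both $x_0$ and $y$ are in scope, so you can additionally case-split on $\beta(x_0,y)$ at that point; each branch then really does reduce to a constant-shifted counting assertion in the two remaining slots. With that one extra inner disjunction, your construction goes through.
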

\begin{proof}
We have above shown that $\mathrm{FOC}^2(\tau)\not\leq \UF{(\tau)}$
if $\tau$ contains a binary relation symbol.
Therefore it suffices to show that $\UF{(\tau)}\leq \mathrm{FOC}^2(\tau)$.
The claim is established by induction on the structure of
$\mathrm{UF}_1^=$-formulas in
diagram normal form. 
We discuss the case involving quantifiers.
Let $\sigma\subseteq \tau$ and
consider a $\sigma$-diagram block $\varphi(x_0)\ :=\ \exists x_1...\exists x_k\, \psi$, where
%
%
%
%
$\psi\ :=\ \beta(x_0,x_1)\ \wedge\ \mathit{diff}(x_0,...,x_k)\ \wedge\ \psi_0(x_0)\
\wedge...\wedge\ \psi_k(x_k).$
%
%
%
%
%
%
Note that we assume that the free variable $x_0$ of $\varphi(x_0)$
occurs in the $\sigma$-diagram $\beta(x_0,x_1)$---unless $\sigma$ does not
contain binary relation symbols and thus $\beta(x_0,x_1) = \top$.
The case where $\sigma$ contains binary relation symbols and 
$x_0$ does not occur in the binary $\sigma$-diagram of $\psi$, is discussed later.
Write each formula $\psi_i(x_i)$ in a form $\delta_i(x_i)\, \wedge\, \delta_i'(x_i)$,
where $\delta_i(x_i)$ is a conjunction of the literals that occur as 
conjuncts in $\psi_i(x_i)$ and $\delta_i'(x_i)$ is the conjunction of the block formulas of $\psi_i(x_i)$.
%
%
%
We have

\vspace{-4pt}

\begin{center}
$\psi\ \equiv\ \beta(x_0,x_1)\ \wedge\
\mathit{diff}(x_0,...,x_k)\wedge \bigl(\delta_0(x_0)\wedge\delta_0'(x_0)\bigr)\
\wedge...\wedge \bigl(\delta_k(x_k)\wedge\delta_k'(x_k)\bigr).$
\end{center}

\vspace{-4pt}

\noindent
Let $P_0,...,P_k$ be fresh unary relation symbols. Consider the formula

\vspace{-4pt}

\begin{center}

$\psi'\ :=\ \beta(x_0,x_1)\ \wedge\
\mathit{diff}(x_0,...,x_k)\wedge \bigl(\delta_0(x_0)\wedge P_0(x_0)\bigr)\
\wedge...\wedge \bigl(\delta_k(x_k)\wedge P_k(x_k)\bigr).$

\end{center}

\vspace{-4pt}

\noindent
Let $\sigma'$ be the set of relation symbols in $\psi'$.
Let us consider the formula $\chi(x_0)\, :=\, \exists x_1...\exists x_k\, \psi'$.
By Lemma \ref{starcentertype}, we have
$\chi(x_0)\ \equiv\ \chi'(x_0)\, :=\, \theta_0(x_0)\vee...\vee\theta_m(x_0),$
where each $\theta_i(x_0)$ is a $\sigma'$-star centre type.
We shall next show that each $\sigma'$-star centre type can be expressed in $\mathrm{FOC}^2$.
This will conclude the argument concerning the formula $\varphi(x_0)$,
as the disjuncts $\theta_i(x_0)$ of $\chi'(x_0)$ can first be replaced
by equivalent $\mathrm{FOC}^2$-formulas, and after that, each subformula $P_i(z)$
($0\leq i\leq k$) in the resulting formula can be replaced by an $\mathrm{FOC}^2$-formula
$\delta_i''(z) \equiv \delta_i'(z)$ obtained by the induction hypothesis.
Here $z$ is either of the variables in the two-variable formula we are constructing.
If necessary, variables in $\delta_i'$ can be circulated to avoid variable capture. This way we
obtain an $\mathrm{FOC}^2$-formula
equivalent to $\varphi(x_0)$.
The notion of a star centre type was of course designed to
be expressible in $\mathrm{FOC}^2$.
Consider the $\sigma'$-star centre type $\exists x_1...\exists x_k\, \gamma$,
where $\gamma$ is the $\sigma'$-star type

\vspace{-5pt}

$$\gamma\ :=\ \mathit{diff}(x_0,...,x_k)\, 
\bigwedge\limits_{i\, \in\, \{1,...,k\}}\beta_i(x_0,x_k)
\, \wedge\bigwedge\limits_{i\, \in\, \{0,...,k\}}\alpha_i(x_i).$$
For each $2$-type $\mathcal{T}$ over $\sigma'$,
let $\#\mathcal{T}$ denote the number of rays of $\gamma$
that realize $\mathcal{T}$. Let $T$ denote
the set of all $2$-types over $\sigma'$. Define

\vspace{-6pt}

$$\gamma'(x_0)\ :=\ \bigwedge\limits_{\substack{\mathcal{T}\, \in\, T}}
                                                            \exists^{\empty\, \geq\#\mathcal{T}}y\ \ \mathcal{T}(x_0,y).$$
%
%
%
%
It is easy to see that the $\mathrm{FOC}^2$-formula $\gamma'(x_0)$ is
equivalent to the $\sigma'$-star
centre type $\exists x_1...\exists x_k\, \gamma$.
Let us then consider a $\sigma$-diagram block formula
$\theta(x_0)\, :=\, \exists x_1...\exists x_k\, \eta$, where
$\eta\ :=\ \beta(x_i,x_j)\ \wedge\ \mathit{diff}(x_0,...,x_k)\ \wedge\ \psi_0(x_0)\
\wedge...\wedge\ \psi_k(x_k),$
and $x_0\not\in\{x_i,x_j\}$. 
Let $\overline{x}$ denote a tuple 
containing exactly the variables in $\{x_0,...,x_k\}\setminus\{x_i\}$.
Consider the block formula 
$\theta'(x_i) := \exists\overline{x}\, \eta$.
In this formula, the free variable $x_i$ occurs in
the part $\beta(x_i,x_j)$ of $\eta$.
Thus, by our argument above, $\theta'(x_i)$ is equivalent
to a formula $\theta''(x_i)$ of $\mathrm{FOC}^2$.
By the induction hypothesis, there are $\mathrm{FOC}^2$-formulas
$\psi_0'(x_0)\equiv \psi_0(x_0)$
and $\psi_j'(x_j)\equiv \psi_j(x_j)$.
Let $\psi_j'(x_0)$ denote the $\mathrm{FOC}^2$-formula
obtained from $\psi_j'(x_j)$ by changing the free variable $x_j$
to $x_0$, and circulating variables, if necessary.
Let the variables used in $\psi_0'(x_0)$ and $\psi_j'(x_0)$
be $x_0$ and $x_i$.
Define $\mathcal{B}(x_i,x_0)\, :=\, \beta(x_i,x_0)\wedge\psi_j'(x_0)$.
The original formula $\theta(x_0)$ is equivalent to
the $\mathrm{FOC}^2$-formula

\vspace{-15pt}

\begin{multline*}
\psi_0'(x_0)\ \wedge\ \exists x_i\Bigl(x_i\not=x_0\wedge\theta''(x_i)\wedge\bigl(
\neg\mathcal{B}(x_i,x_0)
\vee\ \exists^{\geq 2}x_0\bigl(x_i\not=x_0\wedge \mathcal{B}(x_i,x_0)\bigr)\bigr)\Bigr).
\end{multline*}

\vspace{-6pt}

To conclude the proof, we need to discuss the case involving a block formula $\chi\ :=\ \exists x_0...\exists x_k\chi'$
that does not contain a free variable.
Assume, w.l.o.g., that $k\geq 1$.
Convert the block formula $\exists x_1...\exists x_k\chi'$
to an $\mathrm{FOC}^2$-formula $\pi(x_0)$. Thus the original formula $\chi$ is
equivalent to the $\mathrm{FOC}^2$-formula $\exists x_0\pi(x_0)$.
\end{proof}
%


\section{Undecidability of \UFC}\label{undecidabilitysection}


%
%


\newcommand{\LL}{\mathcal{L}}
\newcommand{\G}{\mathfrak{G}}
\newcommand{\NN}{\mathbb{N}}
\newcommand{\cT}{\mathbb{T}}
\newcommand{\TDUF}{\mathcal{UF}_3}
\newcommand{\ODNF}{\mathcal{NF}_1}

Since $\mathrm{FOC}^2$ and $\UF{}$ are decidable,
it is natural to ask is whether the extension of \UF{} by counting quantifiers, \UFC{}, remains decidable.
Formally, \UFC{} is obtained from \UF{} by allowing the free substitution of 
quantifiers $\exists$ by quantifiers $\exists^{\geq k},\exists^{\leq k},\exists^{=k}$.
We next show that both the general and the finite satisfiability problems of $\UFC{}$ are undecidable.
%

%
%

%
For the proofs, we use the standard tiling and periodic tiling arguments.
A \emph{tile} is a mapping $t: \{R,L,T,B\}\to C$, where $C$ is a countably infinite set of colours.
We use the subscript notation $t_X:=t(X)$ for $X\in\{R,L,T,B\}$.
Intuitively, $t_R$, $t_L$, $t_T$ and $t_B$ 
are the colors of the right edge, left edge, top edge and 
bottom edge of the tile $t$, respectively.  
Let $\str{S} := (S,H,V)$ be a structure with domain $S$ and binary relations $H$ and $V$.
Let $\cT$ be a finite nonempty set of tiles. A \emph{$\cT$-tiling} of $\str{S}$ is a function
$f:S\to\cT$ that satisfies the following conditions.
%
%
%

\vspace{-7pt}

\begin{description}
\item[$(T_H)$] For all $a,b\in S$, if $f(a)=t$, $f(b)=t'$ and $(a,b)\in H$, then $t_R=t'_L$.
\item[$(T_V)$] For all $a,b\in S$, if $f(a)=t$, $f(b)=t'$ and $(a,b)\in V$, then $t_T=t'_B$.
\end{description}

\vspace{-7pt}

%
%
%
%
\noindent
The \emph{tiling problem} for $\mathfrak{S}$ asks, given a finite nonempty set $\mathbb{T}$
of tiles, whether there exists a $\mathbb{T}$-tiling of $\mathfrak{S}$.
The standard \emph{grid} is the structure
$\mathfrak{G} := (\NN\times\NN, H,V)$,
where $H = \{\, \bigl((i,j),(i+1,j)\bigr)\, |\, i,j\in \NN\, \}$ 
and $V = \{\, \bigl((i,j),(i,j+1)\bigr)\, |\, i,j\in \NN\, \}$ are binary relations.
It is well known that the tiling problem for $\str{G}$ is $\Pi^0_1$-complete.
Let $n$ be a positive integer. Let $T := [0,n-1]\times [0,n-1]$.
An \emph{$(n\times n)$-torus} is the structure
$(T, H,V)$ such that $H = \{\, \bigl((i,j),(i+1,j)\bigr)\, |\, (i,j)\in T\ \}$
and $V = \{\, \bigl((i,j),(i,j+1)\bigr)\, |\, (i,j)\in T\, \}$,
where the sum is taken modulo $n$.
The \emph{periodic tiling problem} asks, given a 
finite nonempty set $\mathbb{T}$ of tiles,
whether there exist an $n\in\mathbb{Z}_+$
such the $(n\times n)$-torus is $\mathbb{T}$-tilable.
It is well known that the periodic tiling problem is $\Sigma_1^0$-complete.
%
%
%
%
%
%
%
%
%
%
%
%

%
We shall below define a \UFC{}-formula $\eta$ which axiomatizes a 
sufficiently rich class of grid-like structures.
In order to encode grids \emph{with $\UFC{}$-formulas}, we
employ a \emph{ternary} predicate $R$ and a unary predicate $E$. Intuitively, $E$ 
labels elements that represent the nodes of the even rows of a grid, and $R$ contains triples $(a,b,c)$ such that
$b$ is the horizontal successor of $a$ and $c$ is the vertical successor of $b$, or 
$b$ is the vertical successor of $a$ and $c$ is the horizontal successor of $b$. 
The following figure depicts an initial portion of
an infinite structure $\str{A}_\str{G}$ (over the signature $\{R,E\}$)
which is our intended encoding of the standard infinite grid
$\mathfrak{G}$.
An arrow from a node $a$ via $b$ to $c$ means that $R(a,b,c)$ holds.
%

\vspace{-5pt}

\begin{center}
\begin{tikzpicture}[shorten >=1pt,->,scale=0.8]
  \tikzstyle{vertex}=[circle, draw=black,minimum size=2pt,inner sep=0pt]

\node[vertex] (a1) at (0,0) {};
\node[vertex] (a2) at (1,0) {};
\node[vertex] (a3) at (2,0) {};
\node[vertex] (a4) at (3,0) {};
\node[vertex] (a5) at (0,1) {};
\node[vertex] (a6) at (1,1) {};
\node[vertex] (a7) at (2,1) {};
\node[vertex] (a8) at (3,1) {};
\node[vertex] (a9) at (0,2) {};
\node[vertex] (a10) at (1,2) {};
\node[vertex] (a11) at (2,2) {};
\node[vertex] (a12) at (3,2) {};

\coordinate (b1) at (0.1,0.1) {};
\coordinate (b2) at (1.1,0.1) {};
\coordinate (b3) at (2.1,0.1) {};
\coordinate (b4) at (3.1,0.1) {};
\coordinate (b5) at (0.1,1.1) {};
\coordinate (b6) at (1.1,1.1) {};
\coordinate (b7) at (2.1,1.1) {};
\coordinate (b8) at (3.1,1.1) {};
\coordinate (b9) at (0.1,2.1) {};
\coordinate (b10) at (1.1,2.1) {};
\coordinate (b11) at (2.1,2.1) {};
\coordinate (b12) at (3.1,2.1) {};

\coordinate (c6) at (0.9,0.9) {};
\coordinate (c7) at (1.9,0.9) {};
\coordinate (c8) at (2.9,0.9) {};

\coordinate (c10) at (0.9,1.9) {};
\coordinate (c11) at (1.9,1.9) {};
\coordinate (c12) at (2.9,1.9) {};

\draw[->] (b1) .. controls ([xshift=28pt] b1) and ([yshift=-28pt] c6) .. (c6);
\draw[->] (b2) .. controls ([xshift=28pt] b2) and ([yshift=-28pt] c7) .. (c7);
\draw[->] (b3) .. controls ([xshift=28pt] b3) and ([yshift=-28pt] c8) .. (c8);
\draw[->] (b5) .. controls ([xshift=28pt] b5) and ([yshift=-28pt] c10) .. (c10);
\draw[->] (b6) .. controls ([xshift=28pt] b6) and ([yshift=-28pt] c11) .. (c11);
\draw[->] (b7) .. controls ([xshift=28pt] b7) and ([yshift=-28pt] c12) .. (c12);

\draw[->] (b1) .. controls ([yshift=28pt] b1) and ([xshift=-28pt] c6) .. (c6);
\draw[->] (b2) .. controls ([yshift=28pt] b2) and ([xshift=-28pt] c7) .. (c7);
\draw[->] (b3) .. controls ([yshift=28pt] b3) and ([xshift=-28pt] c8) .. (c8);
\draw[->] (b5) .. controls ([yshift=28pt] b5) and ([xshift=-28pt] c10) .. (c10);
\draw[->] (b6) .. controls ([yshift=28pt] b6) and ([xshift=-28pt] c11) .. (c11);
\draw[->] (b7) .. controls ([yshift=28pt] b7) and ([xshift=-28pt] c12) .. (c12);

\coordinate (d1) at (0.9,2.4) {};
\coordinate (d2) at (1.9,2.4) {};
\coordinate (d3) at (2.9,2.4) {};

\draw[-] (b9) .. controls ([xshift=28pt] b9) and ([yshift=-12pt] d1) .. (d1);
\draw[-] (b10) .. controls ([xshift=28pt] b10) and ([yshift=-12pt] d2) .. (d2);
\draw[-] (b11) .. controls ([xshift=28pt] b11) and ([yshift=-12pt] d3) .. (d3);

\coordinate (e1) at (3.4,0.9) {};
\coordinate (e2) at (3.4,1.9) {};

\draw[-] (b4) .. controls ([yshift=28pt] b4) and ([xshift=-12pt] e1) .. (e1);
\draw[-] (b8) .. controls ([yshift=28pt] b8) and ([xshift=-12pt] e2) .. (e2);

\coordinate [label=center:$E$] (A) at (-0.5,0);
\coordinate [label=center:$\neg E$] (A) at (-0.5,1);
\coordinate [label=center:$E$] (A) at (-0.5,2);
\coordinate [label=center:$R$] (A) at (0.5,-0.1);


	
	\end{tikzpicture}
		\end{center}
%

%
\vspace*{-5pt}
Define the formulas $\varphi_H(x,y) := \exists z \bigl(
(R(x,y,z) \vee R(z,x,y)) \wedge (E(x) \leftrightarrow E(y))\bigr)$
and $\varphi_V(x,y) := \exists z \bigl(
(R(x,y,z) \vee R(z,x,y)) \wedge (E(x) \leftrightarrow \neg E(y))\bigr)$.
Note that these are not $\UF{}$-formulas.
Let $\str{A}$ be a structure over the
vocabulary $\{R,E\}$.
We let $\str{A}^*$ be the structure over the vocabulary $\{H,V\}$
such that the $\str{A}^*$ has the same domain $A$ as $\str{A}$,
and the relation $H^{\str{A^*}}$ ($V^{\str{A^*}}$) is the set of pairs $(a,a')\in A$
such that $\str{A}\models \varphi_H[a,a']$ ($\str{A}\models\varphi_V[a,a']$).
Note that $\str{A}_\str{G}^{*}$ is the standard grid $\str{G}$.
We next define a \UFC{}-formula $\eta$ that
captures some essential properties of $\str{A}_\str{G}$.
Let $\eta$ be the conjunction of the formulas (\ref{etaa}) -- (\ref{etag}) below.
Note that the syntactic restrictions of $\UFC{}$ are indeed met.
\begin{align}\label{etaa}\vspace*{-27pt}
\exists x E(x)
\end{align}
\vspace*{-27pt}
\begin{align}\label{etab}
\forall x \exists^{=1}y \exists z (R(x,y,z) \wedge (E(x) \leftrightarrow E(y)))
\end{align}
\vspace*{-27pt}
\begin{align}\label{etac}
\forall x \exists^{=1}y \exists z (R(x,y,z) \wedge (E(x) \leftrightarrow \neg E(y)))
\end{align}
\vspace*{-27pt}
\begin{align}\label{etad}
\forall x \exists^{=1}z \exists y R(x,y,z) 
\end{align}
\vspace*{-27pt}
\begin{align}\label{etae}
\forall x\forall y\forall z (R(x,y,z) \rightarrow (E(x) \leftrightarrow \neg E(z)))
\end{align}
\vspace*{-27pt}
\begin{align}\label{etaf}
\forall x \exists^{=1}y \exists z (((E(x) \leftrightarrow  E(y)) \wedge (R(z,x,y) \vee R(x,y,z)))
\end{align}
\vspace*{-27pt}
\begin{align}\label{etag}
\forall x \exists^{=1}y \exists z (((E(x) \leftrightarrow \neg E(y)) \wedge (R(z,x,y) \vee R(x,y,z)))
\end{align}

We claim that $\eta$ has the following properties.

\vspace*{-6pt}

\begin{enumerate}\itemsep0pt
\item[(i)] There exists a model $\str{A} \models \eta$ such that $\str{A}^* = \str{G}$.
\item[(ii)]  For every model $\str{A} \models \eta$, there is a
homomorphism from $\str{G}$ to $\str{A}^*$.
\end{enumerate}

\vspace*{-6pt}

Assume we can show that $\eta$ indeed has the above properties.
Let $\mathbb{T}$ be an arbitrary input to the tiling problem,
and let $\mathcal{P}_{\mathbb{T}}\, :=\, \{\, P_t\, |\, t\in\mathbb{T}\, \}$ be a set of 
fresh unary predicate symbols. 
Construct a $\UFC{}$-formula $\varphi_{\mathbb{T}} := \psi_0\wedge\psi_H\wedge\psi_V$
over the vocabulary $\{R,E\}\cup\mathcal{P}_{\mathbb{T}}$ as follows.
\begin{enumerate}
\item
$\psi_0$ states that each point of the model is in the interpretation of
exactly one predicate symbol $P_t$, $t\in\mathbb{T}$.
\item
$\psi_H\, \equiv\, \forall x\forall y\bigwedge_{t,t'\in\mathbb{T},\, t_{R}\not=t_{L}'}\neg
\bigl(\varphi_H(x,y)\wedge P_t(x)\wedge P_{t'}(y)\bigr)$. Note that the right hand side here
is not a $\UFC$-formula, but it can easily be modified so that the resulting formula is.
\item
$\psi_V\equiv \forall x\forall y\bigwedge_{t,t'\in\mathbb{T},\, t_{T}\not=t_{B}'}\neg
\bigl(\varphi_V(x,y)\wedge P_t(x)\wedge P_{t'}(y)\bigr)$.
\end{enumerate}
It is easy to see that $\eta \wedge \varphi_{\mathbb{T}}$
has a model iff there exists a $\mathbb{T}$-tiling of $\str{G}$.
\begin{claim}
The formula $\eta$ has the properties (i) and (ii).
\end{claim}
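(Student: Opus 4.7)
The plan is to treat the two clauses separately, with property (i) by exhibiting $\str{A}_\str{G}$ explicitly, and property (ii) by an inductive construction whose correctness rests on a single commutativity lemma.

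For (i), I would take $\str{A}_\str{G}$ to have domain $\NN \times \NN$, let $E^{\str{A}_\str{G}} = \{\,(i,j) : j \text{ is even}\,\}$, and let $R^{\str{A}_\str{G}}$ consist of exactly the triples $((i,j),(i{+}1,j),(i{+}1,j{+}1))$ and $((i,j),(i,j{+}1),(i{+}1,j{+}1))$ for $i,j \in \NN$. Each of the conjuncts (\ref{etaa})--(\ref{etag}) then follows from a routine check: for example, the unique $y$ demanded in (\ref{etab}) is the horizontal successor $(i{+}1,j)$; the unique $y$ in (\ref{etac}) is the vertical successor $(i,j{+}1)$; the unique $z$ in (\ref{etad}) is the diagonal $(i{+}1,j{+}1)$; and (\ref{etae}) holds because the third coordinate of any $R$-triple changes the parity of the row index. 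The identification $\str{A}_\str{G}^* = \str{G}$ is immediate because $\varphi_H(x,y)$ (resp.\ $\varphi_V(x,y)$) holds in $\str{A}_\str{G}$ exactly when $y$ is the horizontal (resp.\ vertical) grid-successor of $x$.

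For (ii), let $\str{A} \models \eta$. Conjunct (\ref{etaa}) supplies a starting element $a_{0,0}$. For each $a \in A$, conjunct (\ref{etab}) defines a unique \emph{horizontal successor} $h_+(a)$, namely the unique $y$ with some $z$ satisfying $R(a,y,z)$ and $E(a) \leftrightarrow E(y)$, and conjunct (\ref{etac}) defines a unique \emph{vertical successor} $v_+(a)$ in the analogous way. I would then set $h(i,j) := h_+^{\,i}\bigl(v_+^{\,j}(a_{0,0})\bigr)$ and verify, using the definitions of $\varphi_H,\varphi_V$ together with (\ref{etab})--(\ref{etac}), that consecutive horizontal and vertical grid edges are sent to pairs in $H^{\str{A}^*}$ and $V^{\str{A}^*}$, respectively. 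For this assignment to define a homomorphism in the grid sense, the crucial property needed is that $h$ is independent of the order in which horizontal and vertical steps are taken, which reduces to the commutativity lemma $v_+(h_+(a)) = h_+(v_+(a))$ for every $a \in A$.

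The commutativity lemma is the main obstacle and uses conjuncts (\ref{etad})--(\ref{etag}) essentially. Write $b_H := h_+(a)$ and $b_V := v_+(a)$, and pick witnesses $c_H, c_V$ with $R(a,b_H,c_H)$ and $R(a,b_V,c_V)$. Conjunct (\ref{etae}) forces $E(c_H)\not\leftrightarrow E(a)$ and $E(c_V)\not\leftrightarrow E(a)$, and then the uniqueness part of (\ref{etad}) collapses them to a single element $c := c_H = c_V$. To show $c = v_+(b_H)$, observe that $R(a,b_H,c)$ is a triple of the $R(z,x,y)$ shape about $b_H$ with $E(b_H)\not\leftrightarrow E(c)$, so $c$ is a witness for the existential part of (\ref{etag}) applied to $b_H$; on the other hand, $v_+(b_H)$ is also a witness for (\ref{etag}), because by (\ref{etac}) it is supplied by a triple of the $R(x,y,z)$ shape with $E(b_H)\not\leftrightarrow E(v_+(b_H))$. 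The uniqueness clause in (\ref{etag}) then forces $c = v_+(b_H)$. The symmetric argument with (\ref{etaf}) in place of (\ref{etag}) yields $c = h_+(b_V)$, giving $v_+(h_+(a)) = c = h_+(v_+(a))$ and completing both the lemma and the claim.
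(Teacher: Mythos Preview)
Your argument is correct, and your treatment of (i) matches the paper's. For (ii) you take a genuinely different, more structural route. The paper builds the homomorphism by an explicit double induction: it first lays down two rows, extending column by column, and then repeats the process row by row; at each inductive step it invokes (\ref{etad}), (\ref{etae}), and one of (\ref{etaf}), (\ref{etag}) to force the new node to sit in the right place. You instead define total successor functions $h_+$ and $v_+$ directly from (\ref{etab}) and (\ref{etac}), isolate the single commutativity lemma $v_+\circ h_+ = h_+\circ v_+$, and derive the homomorphism uniformly from it. The underlying uses of (\ref{etad})--(\ref{etag}) are the same as in the paper's inductive steps, but your packaging avoids the somewhat delicate bookkeeping of the two interleaved inductions and makes clear that commutativity is the only nontrivial content. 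One small point of phrasing: with your fixed definition $h(i,j)=h_+^{\,i}(v_+^{\,j}(a_{0,0}))$ there is no well-definedness issue; what commutativity actually buys you is preservation of the vertical edges, since it gives $h(i,j{+}1)=v_+(h(i,j))$.
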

\begin{proof}
For (i), we take as $\str{A}$ the structure $\str{A}_\str{G}$.
Let us consider the property (ii). Let $\str{A}$ be a
model such that $\str{A} \models \eta$. We will show how to construct
a homomorphism $h$ from the standard  grid $\str{G}$ to $\str{A}^*$.
Let us first define the embedding of the first two rows of the grid.
Let $a' \in A$ be an element witnessing the conjunct (\ref{etaa}). Let $h(0,0)=a'$. 
Let $b',c' \in A$ be elements such that $\str{A} \models R[a',b',c'] \wedge (E[a'] \leftrightarrow \neg E[b'])$,
guaranteed to exist by (\ref{etac}). Let $h(0,1)=b'$.
We observe that $V[h(0,0),h(1,0)]$ holds in $\str{A}^*$, as required.
We also have $\str{A}\models E[a']\wedge\neg E[b']$.
Assume, for the sake of induction, that we have defined
$h(k,0)=a$ and $h(k,1) = b$ for some $k \ge 0$. Assume that
$\str{A}^*\models H[h(m',0),h(m'+1,0)]$ and $\str{A}^*\models V[h(m,0),h(m,1)]$
for all $m'\in\{0,...,k-1\}$ and $m\in\{0,...,k\}$.
Assume also that $\str{A}\models E[h(m,0)]\wedge\neg E[h(m,1)]$
for all $m\in\{0,...,k\}$.
Let $c,d \in A$ be elements such that
$\str{A} \models R[a,c,d] \wedge (E[a] \leftrightarrow E[c])$,
%
%
guaranteed to exist by (\ref{etab}).
%
%
Choose $h(k+1, 0)=c$ and $h(k+1, 1)=d$.
%
%
%
We have $H[h(k,0), h(k+1,0)]$ in $\str{A}^*$.
Let $c',d'\in A$ be elements such that
$\str{A} \models R[a,c',d'] \wedge (E[a] \leftrightarrow \neg E[c'])$,
guaranteed to exist by (\ref{etac}).
We have $d' = d$ by (\ref{etad}).
Thus $\str{A}\models R[a,c',d] \wedge (E[a] \leftrightarrow \neg E[c'])$.
By (\ref{etag}), we conclude that $c' = b$.
By (\ref{etae}), we have $\str{A}\models (E[a] \leftrightarrow \neg E[d])$,
and thus $\str{A}\models (E[d] \leftrightarrow E[c'])$.
Hence we have $\str{A}\models R[a,b,d] \wedge (E[b] \leftrightarrow E[d])$,
whence $H[h(k,1),h(k+1,1)]$ holds in $\str{A}^*$.
We still need to show that $\str{A}^*\models V[h(k+1,0),h(k+1,1)]$, i.e.,
$\str{A}^*\models V[c,d]$.
We already know that $\str{A}$ satisfies $E(a) \leftrightarrow E(c)$ and
$E(a) \leftrightarrow \neg E(d)$.
Thus $\str{A}\models E(c)\leftrightarrow \neg E(d)$. 
Since we also know that $\str{A}\models R[a,c,d]$,
we conclude that $\str{A}^*\models V[c,d]$.
Since $\str{A}\models E[a] \wedge (E[a]\leftrightarrow E[c])
\wedge (E[a]\leftrightarrow \neg E[d])$,
we have $\str{A}\models E[h(k+1,0)]\wedge \neg E[h(k+1,1)]$.
%

%
%

%
We have defined $h$ for the first two rows of $\str{G}$.
Assume, for the sake of induction, that
we have defined $h$ for the first $l\geq 2$ rows of the grid,
and that the homomorphism conditions are satisfied.
Assume also that for all $m\in\mathbb{N}$ and $m'\in \{0,...,l\}$,
we have $\str{A}\models E[h(m,m')]$ iff $m'$ is even.
We extend the definition of $h$ to the $(l+1)$-st row.
As above, we proceed by induction on the columns.
Let $h(0,l)=a$. Let $b,c \in A$ be elements such that $\str{A}
\models R[a,b,c] \wedge (E[a] \leftrightarrow \neg E[b])$,
guaranteed to exist by (\ref{etac}).
Let $h(0,l+1)=b$. We may assume, by symmety,
that $\str{A}\models E[h(0,l)]$.
We have $\str{A}^*\models V[h(0,l),h(0,l+1)]$
and $\str{A}\models \neg E[h(0,l+1)]$.
Assume, for the sake of induction, that there exists a $k$ such that we 
have defined $h(m,l+1)$ for all $m\leq k$. Assume that 
$\str{A}^*\models H[h(m',l+1),h(m'+1,l+1)]$ for all $m'\in\{0,...,k-1\}$ and
$\str{A}^*\models V[h(m,l),h(m,l+1)]$ for all $m\in\{0,...,k\}$.
Assume also that $\str{A}\models \neg E[h(m,l+1)]$ for all $m\in\{0,...,k\}$.
Let $b, c \in A$ be elements such that
$\str{A} \models R[a,b,c] \wedge (E[a] \leftrightarrow E[b])$,
guaranteed to exist by (\ref{etab}).
Define $h(k+1, l+1)=c$.
The arguments concerning the homomorphism conditions (and 
the predicate $E$) are similar to the arguments concerning the first two rows,
but involve also the condition (\ref{etaf}). The
straightforward details are left to the reader.
\end{proof}
Thus the satisfiability problem of $\UFC{}$ is $\Pi_1^0$-hard. Since $\UFC{}$ is a fragment
of first-order logic, the following theorem holds.
\begin{theorem}\label{theoremundec}
The satisfiability problem of\, $\UFC{}$ is $\Pi_1^0$-complete.
\end{theorem}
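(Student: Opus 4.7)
The plan is to combine the hardness argument developed in the paragraphs preceding the theorem with a matching upper bound coming from the fact that $\UFC{}$ embeds into first-order logic. For the lower bound there is nothing new to do: the reduction constructs, given a finite tile set $\mathbb{T}$, a $\UFC{}$-formula $\eta \wedge \varphi_{\mathbb{T}}$ that is satisfiable iff there exists a $\mathbb{T}$-tiling of $\str{G}$, and since the tiling problem for $\str{G}$ is $\Pi_1^0$-hard, so is the satisfiability problem of $\UFC{}$.

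For the upper bound I would first observe that counting quantifiers are definable in first-order logic: the formula $\exists^{\geq k} x\, \psi$ is equivalent to $\exists x_1 \ldots \exists x_k\bigl(\bigwedge_{1 \le i < j \le k} x_i \neq x_j \wedge \bigwedge_{i=1}^{k} \psi(x_i)\bigr)$, and $\exists^{\leq k}$ and $\exists^{= k}$ are definable by Boolean combinations of formulas of this form. Consequently, every $\UFC{}$-sentence is effectively translatable to a first-order sentence with the same models, and in particular $\UFC{}$-satisfiability reduces to the satisfiability problem of plain first-order logic.

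It then remains to invoke the standard fact, a consequence of G\"odel's completeness theorem, that the set of satisfiable first-order sentences lies in $\Pi_1^0$: unsatisfiability is witnessed by a formal proof of the negation, so the unsatisfiable sentences are recursively enumerable, and the satisfiable ones form their complement. Combined with the $\Pi_1^0$-hardness just established, this yields $\Pi_1^0$-completeness.

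The main work, in my view, lies entirely in the tiling construction preceding the theorem; the delicate point there was encoding the grid using just a ternary predicate $R$ and a unary predicate $E$ subject to the uniformity restriction of $\UFC{}$, and the use of counting quantifiers $\exists^{=1}$ in the axioms (\ref{etab})--(\ref{etag}) to enforce functional behaviour on the two kinds of grid edges. Once that reduction and the claim about $\eta$ are in place, the closure argument above is routine, and I do not expect any further obstacle.
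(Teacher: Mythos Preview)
Your proposal is correct and matches the paper's argument: the paper simply states that $\Pi_1^0$-hardness follows from the preceding tiling reduction, and that membership in $\Pi_1^0$ holds because $\UFC{}$ is a fragment of first-order logic. You spell out the translation of counting quantifiers and the appeal to completeness more explicitly than the paper does, but the substance is the same.
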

The above argument leading to Theorem \ref{theoremundec} can be used
with minor modifications in order to show $\Sigma_1^0$-completeness of the
finite satisfiability problem of $\UFC{}$ using the periodic tiling problem.
\begin{theorem}\label{undecfinite}
The finite satisfiability problem of\, $\UFC{}$ is $\Sigma_1^0$-complete.
\end{theorem}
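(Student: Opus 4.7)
The upper bound $\Sigma_1^0$ is immediate: since $\UFC{}$ is a fragment of first-order logic, one can enumerate finite structures of increasing size and test each for satisfaction, yielding a recursively enumerable procedure for finite satisfiability.

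For $\Sigma_1^0$-hardness, I would reduce from the periodic tiling problem, using essentially the same formula $\eta \wedge \varphi_{\mathbb{T}}$ as in the proof of Theorem \ref{theoremundec}. The claim to establish is that $\eta \wedge \varphi_{\mathbb{T}}$ has a finite model iff some $(n \times n)$-torus is $\mathbb{T}$-tileable. For the forward direction, starting from a $\mathbb{T}$-tiling of an $(n \times n)$-torus (and replacing $n$ by $2n$ if necessary so that $n$ is even and the $E$-alternation closes up in the V-direction), I would build the obvious toroidal analogue $\str{A}_n$ of $\str{A}_{\str{G}}$ on the domain $[0, n-1]^2$ with arithmetic taken modulo $n$, interpret the tile predicates $P_t$ according to the tiling, and verify $\str{A}_n \models \eta \wedge \varphi_{\mathbb{T}}$ by the same element chase used for property (i) of $\eta$ in the proof of Theorem \ref{theoremundec}.

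The backward direction is the substantive one. Given a finite model $\str{A} \models \eta$, axioms (\ref{etab}), (\ref{etac}), (\ref{etad}) define total functions $H, V, D: A \to A$ (the H-successor, V-successor, and diagonal corner). Case analysis of the two disjuncts appearing in (\ref{etaf}) and (\ref{etag}) yields $D(z) = V(H(z)) = H(V(z))$ for every $z$, so $H$ and $V$ commute. Consequently the set $R := \bigcap_{n \geq 0} H^n V^n(A)$ is non-empty and closed under both $H$ and $V$, and $H$ and $V$ act as commuting bijections on $R$; hence $R$ decomposes into a disjoint union of finite ``abstract tori'' of the form $\mathbb{Z}/p \times \mathbb{Z}/q$. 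Axiom (\ref{etac}) forces every V-cycle length $q$ to be even. Pulling back the tile-predicate interpretation to any such $(p \times q)$-torus yields a valid $\mathbb{T}$-tiling (by the same argument as in the Claim of Theorem \ref{theoremundec}), which by periodic repetition lifts to a $\mathbb{T}$-tiling of the $(n \times n)$-torus with $n = \mathrm{lcm}(p, q)$, doubled if needed to ensure parity.

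The main obstacle is the commutativity and bijectivity analysis of $H$ and $V$ in a finite model, together with verifying that the tile-edge predicates $\varphi_H, \varphi_V$ restricted to $R$ faithfully encode the torus's H- and V-edges. Both amount to straightforward bookkeeping following the pattern already established in the proof of Theorem \ref{theoremundec}.
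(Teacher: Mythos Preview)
Your approach is correct and reaches the same conclusion, but by a genuinely different route than the paper. The paper does not re-derive commutativity or pass to an eventual image; instead it simply reuses the homomorphism $h\colon \str{G} \to \str{A}^*$ already built for property~(ii), applies pigeonhole to the bottom row to find $i<j$ with $h(i,0)=h(j,0)$, propagates this equality to all rows via~(\ref{etag}) to obtain $h(i,s)=h(j,s)$ for every $s$, then applies pigeonhole again to the sequence of row-tuples $\pi_s=(h(i,s),\ldots,h(j-1,s))$ to find a vertical period $l-k$. This yields a homomorphism from the rectangular $(j-i)\times(l-k)$ torus into $\str{A}^*$, and an $\mathrm{lcm}$ gives a square one. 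Your route is more algebraic: you extract the successor functions $H,V,D$, prove $HV=VH=D$ from (\ref{etaf})--(\ref{etag}), restrict to the set $R$ on which both act bijectively, and read the tiling off an orbit. The paper's argument recycles more machinery from the previous theorem and stays closer to the grid picture; yours makes the underlying $\mathbb{Z}^2$-dynamics explicit and is arguably cleaner conceptually, at the cost of redoing work that the grid homomorphism already encodes.

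One small imprecision worth flagging: an orbit of two commuting permutations need not be of the form $\mathbb{Z}/p \times \mathbb{Z}/q$; in general it is $\mathbb{Z}^2/\Lambda$ for some finite-index sublattice $\Lambda$ that need not be rectangular (think of $H=V$ on a single cycle). This does not damage your argument---choose any $n$ with $n\mathbb{Z}^2 \subseteq \Lambda$ (for instance $n$ the exponent of the orbit as an abelian group) and the $(n\times n)$-torus surjects onto the orbit, so the tiling pulls back---but the sentence as written is not quite accurate.
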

\begin{proof}
We use the same formulas $\eta$ and $\varphi_{\mathbb{T}}$
as in the proof of Theorem \ref{theoremundec}.
We claim that $\eta$ has the following properties.
\begin{enumerate}\itemsep0pt
\item[(iii)] For every positive integer $n$,
there is a structure $\str{A} \models \eta$ such that $\str{A}^*$ is the $(2n\times 2n)$-torus.
(We use the factor $2$ here in order to deal with the predicate $E$ appropriately.)
\item[(iv)]  For every finite model $\str{A} \models \eta$, there exists some
$(n\times n)$-torus $\str{T}$ such that there is a homomorphism
from $\str{T}$ to $\str{A}^*$.
\end{enumerate}
Assume we can show that $\eta$ satisfies (iii) and (iv).
It is then easy to show that for a nonempty finite set $\mathbb{T}$ of tiles,
the formula $\eta\wedge\varphi_{\mathbb{T}}$ has a finite model
iff there exists some $n\in\mathbb{Z}_+$ such that the $(n\times n)$-torus is $\mathbb{T}$-tilable.
Thus the finite satisifiability problem for $\UFC{}$ is $\Sigma_1^0$-hard.
Since the finite satisfiability problem of first-order logic is in $\Sigma_1^0$,
the theorem holds.
%
%
%
%

%
We then sketch a proof that $\eta$ indeed has the properties (iii) and (iv).
For each positive integer $n$, we let $\str{T}_n$ denote the $(n\times n)$-torus.
For (iii), we define $\str{A}$ to be the natural quotient of $\str{A}_{\str{G}}$
with $2n\times 2n$ elements such that $\str{A}^* = \str{T}_{2n}$.
To prove (iv), let $\str{A}$ be a finite structure such that $\str{A}\models\eta$.
We first construct a homomorphism
$h$ from the standard  grid $\str{G}$ to $\str{A}^*$
precisely as in the proof of Theorem \ref{theoremundec}.
Since $\str{A}$ is finite,
there exist some $i,j$, $ i < j$, such that we have $h(i,0) = h(j,0)$.
We can show by induction, using formula
(\ref{etag}), that $h(i,s)=h(j,s)$ for all $s \in \N$.
Now, for each $s \in \mathbb{N}$, let $\pi_s$
denote the tuple
$$\bigl(h(i,s), h(i+1, s), \ldots, h(j-1,s)\bigr).$$
Finitness of $\str{A}$ guarantees that there exist $k,l$, $k < l$
such that $\pi_k$ = $\pi_l$.
Let $h': [0,j-i-1] \times [0, l-k-1] \rightarrow A$ be the function
defined so that $h'(x,y) = h(i+x,k+y)$.
The arguments given above imply that $h'$ is a homomorphism from
the (not necessarily square) $(j-i) \times (l-k)$ torus to $\str{A}^*$.
Define $t\, :=\, \mathit{lcm}(j-i,\,  l-k,\,  2)$,
where $\mathit{lcm}$ denotes the least common multiple operation.
Let $m := \frac{t}{2}$.
Using $h'$, it is easy to define an embedding from the
square $(2m \times 2m)$-torus to $\str{A}^*$.
\end{proof}
%


\bibliography{uodf}




\end{document}